\numberwithin{equation}{section}
\newcommand{\dk}{d_{\K}^k}
\newcommand{\K}{\mathbb{K}}
\newcommand{\N}{\mathbb{N}}
\newcommand{\R}{\mathbb{R}}
\newcommand{\M}{\mathbb{M}}
\newcommand{\J}{\mathcal{J}}
\newcommand{\B}{\mathcal{B}}
\newcommand{\T}{\mathcal{T}}
\newcommand{\ep}{\varepsilon}
\newcommand{\norm}[1]{\|#1\|}
\newcommand{\set}[1]{\left\{#1\right\}}
\newcommand{\abs}[1]{\lvert#1\rvert}
\newcommand{\indicator}[1]{{\mathbf 1}_{{#1}}}
\newtheorem{theorem}{Theorem}[section]
\theoremstyle{plain}
\newtheorem{lemma}[theorem]{Lemma}
\newtheorem{lem}[theorem]{Lemma}
\newtheorem{thm}[theorem]{Theorem}
\newtheorem{prop}[theorem]{Proposition}
\newtheorem{cor}[theorem]{Corollary}
\theoremstyle{definition}
\newtheorem{remark}[theorem]{Remark}
\newtheorem{defi}[theorem]{Definition}
\newcommand{\eps}{\varepsilon}
\newcommand{\car}{\mathds{1}}
\newcommand{\supp}{\text{supp}\,}
\newcommand{\diam}{\text{diam}\,}
\newcommand{\n}{\overline n}
\newcommand{\m}{\overline m}
\newcommand{\lbar}{\overline \ell}
\begin{document}

\title[On the coarse geometry of James spaces]{On the coarse geometry of James spaces}

\author{G.~Lancien}
\address{Gilles Lancien, Laboratoire de Math\'ematiques de Besan\c con, Universit\'e Bourgogne Franche-Comt\'e, CNRS UMR-6623, 16 route de Gray, 25030 Besan\c con C\'edex, Besan\c con, France}
\email{gilles.lancien@univ-fcomte.fr}

\author{C.~Petitjean}
\address{Colin Petitjean, Laboratoire de Math\'ematiques de Besan\c con, Universit\'e Bourgogne Franche-Comt\'e, CNRS UMR-6623, 16 route de Gray, 25030 Besan\c con C\'edex, Besan\c con, France}
\email{colin.petitjean@univ-fcomte.fr}

\author{A.~Proch\'azka}
\address{Antonin Proch\'azka, Laboratoire de Math\'ematiques de Besan\c con, Universit\'e Bourgogne Franche-Comt\'e, CNRS UMR-6623, 16 route de Gray, 25030 Besan\c con C\'edex, Besan\c con, France}
\email{antonin.prochazka@univ-fcomte.fr}

\thanks{The three authors are supported by the French ``Investissements d'Avenir'' program, project ISITE-BFC (contract
 ANR-15-IDEX-03).}
\keywords{}
\subjclass[2010]{46B20, 46B80, 46B85, 46T99}

\begin{abstract}
In this note we prove that the Kalton interlaced graphs
do not equi-coarsely embed into the James space $\J$ nor into its dual $\J^*$. It is a particular case of a more general result on the non equi-coarse embeddability of the Kalton graphs into quasi-reflexive spaces with a special asymptotic stucture. This allows us to exhibit a coarse invariant for Banach spaces, namely the non equi-coarse embeddability of this family of graphs, which is very close to but different from the celebrated property $\mathcal Q$ of Kalton. We conclude with a remark on the coarse geometry of the James tree space $\J\T$ and of its predual.
\end{abstract}

\maketitle

\section{Introduction}

In a fundamental paper on the coarse geometry of Banach spaces (\cite{kaltonpropertyq}), N. Kalton introduced a property of metric spaces that he named property $\mathcal Q$. In particular,
its absence served as an obstruction to coarse embeddability into reflexive Banach spaces. This property is related to the behavior of Lipschitz maps defined on a particular family of metric graphs that we shall denote $([\N]^k,d_{\K}^k)_{k \in \N}$. We will recall the precise definitions of these graphs and of property $\mathcal Q$ in section \ref{subsection:Q}. Let us just say, vaguely speaking for the moment, that a Banach space $X$ has property $\mathcal Q$ if for every Lipschitz map $f$ from $([\N]^k,d_{\K}^k)$ to $X$, there exists a full subgraph $[\M]^k$ of $[\N]^k$, with $\M$ infinite subset of $\N$, on which $f$ satisfies a strong concentration phenomenon. It is then easy to see that if a Banach space $X$ has property $\mathcal Q$, then the family of graphs $([\N]^k,d_{\K}^k)_{k \in \N}$ does not equi-coarsely embed into $X$ (see the definition in section \ref{s:coarse}). One of the main results in \cite{kaltonpropertyq} is that any reflexive Banach space has property $\mathcal Q$. It then readily follows that a reflexive Banach space cannot contain a coarse copy of all separable metric spaces, or equivalently does not contain a coarse copy of the Banach space $c_0$. In fact, with a sophistication of this argument, Kalton proved an even stronger result in \cite{kaltonpropertyq}: if a separable Banach space contains a coarse copy of $c_0$, then there is an integer $k$ such that the dual of order $k$ of $X$ is non separable. In particular, a quasi-reflexive Banach space does not contain a coarse copy of $c_0$.
However, Kalton proved that the most famous example of a quasi-reflexive space, namely the James space $\J$, as well as its dual $\J^*$, fail property $\mathcal Q$.

The main purpose of this paper is to show that, although they do not obey the concentration phenomenon described by property $\mathcal Q$, neither $\J$ nor $\J^*$ equi-coarsely contains the family of graphs $([\N]^k,d_{\K}^k)_{k \in \N}$ (Corollary \ref{James}). This provides a coarse invariant, namely ``not containing equi-coarsely the Kalton graphs'', that is very close to but different from property $\mathcal Q$. This could allow to find obstructions to coarse embeddability between seemingly close Banach spaces. Our result is actually more general. We prove in Theorem \ref{general} that a quasi-reflexive Banach space $X$ such that both $X$ and $X^*$ admit an equivalent $p$-asymptotically uniformly smooth norm (see the definition in section \ref{asymptotics}), for some $p$ in $(1,\infty)$, does not equi-coarsely contain the Kalton graphs.

We conclude this note by showing that if the James tree space $\J\T$ or its predual coarsely embeds into a separable Banach space $X$, then there exists $k\in \N$ so that the dual of order $k$ of $X$ is non separable. This extends slightly Theorem 3.5 in \cite{kaltonpropertyq}.

\section{Metric notions}

\subsection{Coarse embeddings}\label{s:coarse}

Let $M$, $N$ be two metric spaces and $f \colon M \to N$ be a map. We define the compression modulus $\rho_f$ and the expansion modulus $\omega_f$ as follows. For $t\in [0,\infty)$, we set
\begin{eqnarray*}
&&\rho_f (t) = \inf \{ d_N(f(x),f(y)) \, : \, d_M(x,y) \geq t \},\\
&&\omega_f (t) = \sup \{ d_N(f(x),f(y)) \, : \, d_M(x,y) \leq t \}.
\end{eqnarray*}
We adopt the convention $\sup(\emptyset)=0$ and $\inf(\emptyset)=\infty$.
Note that for every $x,y \in M$,
$$\rho_f (d_M(x,y)) \leq d_N(f(x),f(y)) \leq \omega_f (d_M(x,y)).$$
We say that $f$ is a \emph{coarse embedding}  if $\omega_f (t) < \infty$ for every $t \in [0,+\infty)$ and $\lim_{t \to \infty} \rho_f (t) = \infty$.

Next, let $(M_i)_{i \in I}$ be a family of metric spaces. We say that the family $(M_i)_{i \in I}$ \emph{equi-coarsely embeds} into a metric space $N$ if there exist two maps $\rho, \, \omega \colon [0,+\infty) \to [0,+\infty)$ and maps $f_i \colon M_i \to N$ for $i \in I$ such that:
\begin{enumerate}[(i)]
\item $\lim_{t \to \infty} \rho(t) = \infty$,
\item $\omega(t) < \infty$ for every $t \in [0,+\infty)$,
\item $\rho(t) \leq \rho_{f_i}(t)$ and $\omega_{f_i}(t) \leq \omega(t)$ for every $i \in I$ and $t \in [0,\infty)$.
\end{enumerate}

\subsection{The Kalton interlaced graphs and property Q}
\label{subsection:Q}

For $k\in \N$  and $\M$ an infinite subset of $\N$, we put $[\M]^{\le k}=\{ S\subset \M: |S|\le k\}$,   $[\M]^{ k}=\{ S\subset \M: |S|=k\} $,  $[\M]^\omega=\{ S\subset \M: S\text{ is infinite}\} $, and $[\M]^{< \omega}=\{ S\subset \M: S\text{ is finite}\}$.
We always list the elements of some $\m$ in $[\N]^{< \omega}$ or in $[\N]^{ \omega}$ in increasing order, meaning that if we write $\m=(m_1,m_2,\ldots, m_l)$ or  $\m=(m_1,m_2,m_3, \ldots )$, we tacitly assume that $m_1<m_2<\cdots $.

For $\m=(m_1,m_2,\ldots, m_r)\in [\N]^{<\omega}$ and $\n=(n_1,n_2,\ldots, n_s)\in [\N]^{<\omega}$, we write $\m \prec \n$, if $r<s\le k$ and  $m_i=n_i$, for $i=1,2,\ldots, r$, and we write $\m\preceq \n$ if $\m \prec \n$ or $\m=\n$. Thus $\m \preceq \n$ if $\m$ is an initial segment of $\n$.

Following Kalton \cite{kaltonpropertyq}, for $\M \in [\N]^\omega$, we equip $[\M]^k$ with a graph structure by declaring $\m\neq\n\in [\M]^k$ adjacent if and only if
$$
n_1 \leq m_1 \leq n_2  \ldots \leq n_k \leq m_k\ \  \text{or}\ \  m_1 \leq n_1 \leq m_2  \ldots \leq m_k \leq n_k .
$$
For any $\m,\n \in [\M]^k$, the distance $\dk(\m,\n)$ is then defined as the shortest path distance in the graph $[\M]^k$.

\begin{remark}The distance $\dk$ is independent of the set $\M$ and therefore $[\M_1]^k$ is a metric subspace of $[\M_2]^k$ whenever $\M_1 \in [\M_2]^\omega$.
\end{remark}
This last claim is an immediate consequence of the following explicit formula for the distance.
\begin{prop}\label{p:KaltonDistanceFormula}
Let $k \in \N$ and $\M \in [\N]^\omega$.
Then $\dk(\n,\m)=d(\n,\m)$ for all $\n,\m\in [\M]^k$ where
$d(\n , \m) = \sup \{\big| |\n\cap S| - |\m\cap S| \big| \; : \; S \text{ segment of } \N \}.$
\end{prop}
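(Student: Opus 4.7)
Introducing the counting function $\phi_{\mathbf{s}}(j) := |\mathbf{s} \cap [1,j]|$ for $\mathbf{s} \in [\N]^{<\omega}$, the identity $|\mathbf{s} \cap [a,b]| = \phi_{\mathbf{s}}(b) - \phi_{\mathbf{s}}(a-1)$ lets me rewrite, with $g := \phi_{\n} - \phi_{\m}$ and $g(0) = 0$,
\[
d(\n,\m) \;=\; \sup_{a \le b} \bigl| g(b) - g(a-1) \bigr| \;=\; \max_{j \ge 0} g(j) \;-\; \min_{j \ge 0} g(j).
\]
This reformulation drives both inequalities. Since $d$ satisfies the triangle inequality (clear from the supremum form), the bound $d \le \dk$ reduces to checking $d(\n,\m) \le 1$ on a single edge: assuming $n_1 \le m_1 \le \cdots \le n_k \le m_k$, the elements of any segment $S$ form a consecutive block of the non-decreasing alternating sequence $n_1, m_1, n_2, m_2, \ldots$, so the counts of $\n$- and $\m$-entries in $S$ differ by at most one.

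For the reverse inequality $\dk \le d$, I would induct on $r := d(\n,\m)$. The base case $r = 0$ forces $g \equiv 0$, i.e.\ $\n = \m$. For $r \ge 1$, symmetry lets me assume $M := \max g \ge 1$, and I consider $E := \{j \ge 1 : g(j) \ge 1\}$, a finite disjoint union of maximal integer intervals $[a_\ell, b_\ell]$. Because $g$ jumps by $\pm 1$ exactly at points of $\n \triangle \m$, the transition behaviour at each interval boundary forces $a_\ell \in \n \setminus \m$ and $b_\ell + 1 \in \m \setminus \n$. Setting
\[
\mathbf{p} \;:=\; \bigl(\m \setminus \{b_\ell + 1 : \ell\}\bigr) \cup \{a_\ell : \ell\} \;\in\; [\M]^k,
\]
one obtains $\phi_{\mathbf{p}} = \phi_{\m} + \indicator{E}$; a short verification shows that $\|\phi_{\mathbf{p}} - \phi_{\m}\|_\infty \le 1$ together with $\mathbf{p} \ne \m$ is equivalent to the interleaving condition defining adjacency, so $\mathbf{p}$ is a neighbour of $\m$. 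Finally, $g' := \phi_{\n} - \phi_{\mathbf{p}} = g - \indicator{E}$ equals $g - 1$ on $E$ (values in $[0, M-1]$) and $g$ on $E^c$ (values in $[\min g, 0]$), yielding $\max g' - \min g' = (M-1) - \min g = r - 1$, which closes the induction.

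The main obstacle is precisely the construction of $\mathbf{p}$: one must pick $E$ so that the associated swap produces a legitimate $\mathbf{p} \in [\M]^k$ adjacent to $\m$. The choice $E = \{g \ge 1\}$ is the correct one because the unit jumps of the integer-valued function $g$ at each boundary of a maximal interval automatically encode the required membership constraints ($a_\ell \in \n \setminus \m$ and $b_\ell + 1 \in \m \setminus \n$), making the swap valid; a different choice of $E$ would generically ask one to ``insert'' into $\m$ indices already belonging to $\m$, or remove indices not there, and fail to yield an element of $[\M]^k$.
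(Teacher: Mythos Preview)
Your argument is correct and follows the same overall strategy as the paper: both rewrite $d(\n,\m)$ as $\max g-\min g$ for the counting difference $g=\phi_{\n}-\phi_{\m}$, and both prove equality with $\dk$ by constructing, for each pair at $d$-distance $r\ge 2$, an intermediate point at $d$-distance $1$ from one endpoint and $r-1$ from the other. The implementation of that construction differs: the paper modifies $\n$ by swapping points selected greedily from $\arg\max(g)$ and $\arg\min(g)$ (with a small case distinction at the end), whereas you modify $\m$ using the level set $E=\{g\ge 1\}$ and the endpoints of its maximal intervals. Your version is arguably cleaner, since the unit-step property of $g$ makes the membership constraints $a_\ell\in\n\setminus\m$ and $b_\ell+1\in\m\setminus\n$ automatic and avoids the paper's special point $r$.

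One imprecision to fix: the claim that ``$\|\phi_{\mathbf p}-\phi_{\m}\|_\infty\le 1$ and $\mathbf p\neq\m$ is equivalent to adjacency'' is false as stated (e.g.\ $\mathbf p=(1,4)$, $\m=(2,3)$ have sup-norm difference $1$ but $d=2$). The correct equivalence is that $\phi_{\mathbf p}-\phi_{\m}$ takes values in $\{0,1\}$ (resp.\ $\{-1,0\}$) if and only if $p_i\le m_i\le p_{i+1}$ (resp.\ the reversed interleaving). Since in your construction $\phi_{\mathbf p}-\phi_{\m}=\indicator{E}\in\{0,1\}$, the conclusion that $\mathbf p$ is adjacent to $\m$ is unaffected.
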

\begin{proof}
It is easily seen that $d$ is a metric on $[\M]^k$.
Since $\dk$ is a graph metric on $[\M]^k$, in order to show $\dk=d$ it is enough to verify that $\dk(\n,\m)=1$ if and only if $d(\n,\m)=1$ and that $d$ is a graph metric.

For $A\subset \N$ let us denote $\indicator{A}:\N \to \set{0,1}$ the indicator function of $A$ and let us first observe the following fact.\\
\medskip
\textbf{Fact:} \emph{For every $\n,\m\in [\M]^k$,
$$d(\n,\m)=\max_{i}F(i)-\min_{i}F(i)$$
where $F(i)=F_{\n,\m}(i)=\sum_{j=1}^i\indicator{\n}(j)-\indicator{\m}(j)$ (and $F(0)=0$).}

\medskip
Indeed, we have for any segment
$S=[a,b]$ that
$$
\abs{S\cap \n}-\abs{S\cap\m}=\sum_{j\in S}\Big(\indicator{\n}(j)-\indicator{\m}(j)\Big)=F(b)-F(a-1).
$$
In particular $\max_S\big||S\cap \n|-|S\cap\m|\big|\leq \max F-\min F$.
On the other hand if $S=[a,b]$ is such that $\{F(a-1),F(b)\}=\{\max F,\min F\}$ then
$\big||S\cap \n|-|S\cap\m|\big|\geq \max F-\min F$ which finishes the proof of the fact.

It is clear that $\dk(\n,\m)=1$ if and only if $\max F-\min F=1$.
Thus it only remains to prove that $d$ is a graph metric.
Now given $\n,\m$ such that $d(\n,\m)\geq 2$ we are looking for $\lbar\in [\M]^k\setminus\set{\m,\n}$ such that $d(\m,\n)=d(\n,\lbar)+d(\lbar,\m)$.
Without loss of generality we will assume that $\max F_{\n,\m}>0$. 
Notice that the sets $\arg\max(F)$ and $\arg\min(F)$ are disjoint.
We select inductively $\{a_1<\ldots<a_p\}\subset \arg\max(F)$ and $\{b_1<\ldots<b_q\}\subset \arg\min(F)$ (with $p\geq 1$ and $q\geq 0$) with the property that

\begin{itemize}
\item $a_1=\min \arg\max(F)$,
\item For $i\ge 1$,\ $b_i=\min \left(\{n>a_i\} \cap \arg\min(F)\right)$, if this is not empty.
\item $a_{i+1}=\min\left( \{n>b_i\} \cap \arg\max(F)\right)$, if this set is not empty.
\end{itemize}
Notice that $\{a_1,\ldots,a_p\} \subset \n \setminus \m$ and $\{b_1,\ldots,b_q\} \subset \m\setminus \n$.
Notice also that either $p=q$ or $p=q+1$.
In the latter case we define $b_p:=r$ for some $r$ such that $r>a_p$ and $F(r-1)>F(r)$.
Such $r$ must exist since $F(\max\{n_k,m_k\})=0$.
Also we have $r \in \m \setminus \n$.
We will set
$$
\lbar=\n \cup \{b_1,\ldots,b_p\} \setminus \{a_1,\ldots,a_p\}.
$$
It is clear that $\lbar \in [\M]^k$.
We also have $\max F_{\lbar,\m}=\max F_{\n,\m}-1$ and $\min F_{\lbar,\m}=\min F_{\n,\m}$.
Indeed, the point $\lbar$ is constructed in such a way that when $F_{\n,\m}$ attains its maximum for the first time  (going from the left), $F_{\lbar,\m}$ is reduced by one and stays reduced by 1 until the next time the minimum of $F_{\n,\m}$ is attained (or until the point $r$) where this reduction is corrected back; and so on.
Thus $d(\lbar,\m)=d(\n,\m)-1$.
Also, since the sets $\set{a_1,\ldots,a_p}$ and $\set{b_1,\ldots,b_p}$ are interlaced we have $F_{\n,\m}-1\leq F_{\lbar,\m}\leq F_{\n,\m}$.
Therefore, since $F_{\n,\m}=F_{\n,\lbar}+F_{\lbar,\m}$, we have that $0\leq F_{\n,\lbar}\leq 1$ and so finally $d(\n,\lbar)=1$, since it is clear that $\n \neq \lbar$.
\end{proof}
Note that if $X$ is a Banach space and $f \colon ([\M]^k,d_{\K}^k) \to X$ is a map with finite expansion modulus $\omega_f$, then $\omega_f(1)$ is actually the Lipschitz constant of $f$ as $d_{\K}^k$ is a graph distance on $[\M]^k$.

In \cite{kaltonpropertyq} the property $\mathcal Q$ is defined in the setting of metric spaces. For homogeneity reasons, its definition can be simplified for Banach spaces. Let us  recall it here.

\begin{defi}
Let $X$ be a Banach space. We say that $X$ has \emph{property $\mathcal{Q}$} if there exists $C\ge 1$ such that for every $k \in \N$ and every Lipschitz map $f \colon ([\N]^k,d_{\K}^k) \to X$, there exists an infinite subset $\M$ of $\N$ such that:
$$\forall \, \overline{n},\overline{m} \in [\M]^k,\  \|f(\overline{n})-f(\overline{m})\| \leq  C\omega_f(1).$$
\end{defi}

The following proposition should be clear from the definitions. We shall however include its short proof.
\begin{prop} \label{QandGk}
Let $X$ be a Banach space. If $X$ has property $\mathcal Q$, then the family of graphs $([\N]^k,d_{\K}^k)_{k\in \N}$ does not equi-coarsely embed into $X$.
\end{prop}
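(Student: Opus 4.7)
The plan is to argue by contradiction. Suppose that $([\N]^k,d_\K^k)_{k \in \N}$ equi-coarsely embeds into $X$ via maps $f_k\colon [\N]^k\to X$, with moduli $\rho,\omega$ satisfying conditions (i)--(iii) of Section~\ref{s:coarse}. Since $d_\K^k$ is a graph metric on $[\N]^k$, the bound $\omega_{f_k}(1)\le \omega(1)<\infty$ forces each $f_k$ to be Lipschitz with constant at most $\omega(1)$, so property $\mathcal Q$ can be invoked individually for every $f_k$.

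Next I would apply property $\mathcal Q$: let $C\ge 1$ be the constant provided by the definition for $X$; for each $k$ this yields an infinite set $\M_k\subseteq\N$ such that
\[
\|f_k(\n)-f_k(\m)\|\le C\,\omega_{f_k}(1)\le C\,\omega(1)\qquad\text{for all }\n,\m\in[\M_k]^k.
\]
The key observation is that the ``trimmed'' subgraph $[\M_k]^k$ still realizes the full $d_\K^k$-diameter, namely $k$. Writing $\M_k=(\mu_i)_{i\ge 1}$ in increasing order, I would select
\[
\n_k=\{\mu_1,\ldots,\mu_k\}\quad\text{and}\quad \m_k=\{\mu_{k+1},\ldots,\mu_{2k}\};
\]
Proposition~\ref{p:KaltonDistanceFormula} applied to the segment $S=[\mu_1,\mu_k]$ gives $d_\K^k(\n_k,\m_k)\ge k$, and the converse inequality $d_\K^k(\n_k,\m_k)\le k$ is immediate from the same formula since the function $F_{\n_k,\m_k}$ takes values in $[0,k]$.

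Combining the two facts produces
\[
\rho(k)\le \rho_{f_k}(k)\le \|f_k(\n_k)-f_k(\m_k)\|\le C\,\omega(1)
\]
for every $k\in\N$, in plain contradiction with $\lim_{t\to\infty}\rho(t)=\infty$. The whole argument is essentially immediate once property $\mathcal Q$ is applied to each $f_k$ with the \emph{same} constant $C$; there is no real obstacle beyond the geometric remark that one can always find two vertices at distance equal to $k$ inside $[\M_k]^k$, which is what lets the coarse lower bound $\rho(k)$ collide with the uniform upper bound $C\omega(1)$.
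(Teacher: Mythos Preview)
Your proof is correct and follows essentially the same approach as the paper's own argument: assume an equi-coarse embedding, apply property~$\mathcal Q$ to each $f_k$ with the uniform constant $C$, and derive $\rho(k)\le C\omega(1)$ from the fact that $[\M_k]^k$ has diameter $k$. The only difference is cosmetic: the paper simply asserts $\diam([\M_k]^k)=k$, whereas you spell this out by exhibiting two explicit vertices at distance $k$ via Proposition~\ref{p:KaltonDistanceFormula}.
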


\begin{proof}
Let $C\ge 1$ be given by the definition of property $\mathcal Q$. Aiming for a contradiction, assume that the family $([\N]^k,d_{\K}^k)_{k\in \N}$ equi-coarsely embeds into $X$. That is, there are maps $f_k \colon ([\N]^k,d_{\K}^k) \to X$ and two functions $\rho, \omega \colon [0,+\infty)\to[0,+\infty)$  such that $\lim_{t \to \infty} \rho (t)=\infty$ and
$$\forall k\in \N\ \ \forall t>0\ \ \rho(t) \leq \rho_{f_k}(t)\ \ \text{and}\ \ \omega_{f_k}(t) \leq \omega(t)<\infty.$$
Thus, for every $k\in \N$, there exists an infinite subset $\mathbb M_k$ of $\N$ such that $\diam(f([\M_k]^k)))\le C\omega(1)$. Since $\diam([\M_k]^k)=k$, this implies that for all $k\in \N$, $\rho(k) \le C\omega(1)$. This contradicts the fact that $\lim\limits_{t \to \infty} \rho(t)=\infty$.
\end{proof}

A concrete bi-Lipschitz copy of the metric spaces $([\N]^k,d_{\K}^k)$ in $c_0$ is given by the following proposition.

\begin{prop}\label{Kgraphsinc0} Let $(s_n)_{n=1}^\infty$ be the summing basis of $c_0$, that is\\
$s_n=\sum_{i=1}^ne_i$, where $(e_i)_{i=1}^\infty$ is the canonical basis of $c_0$.\\
For $k\in \N$, define  $f_k:([\N]^k,d_{\K}^k)\to c_0$ by $f_k(\n)=\sum_{i=1}^k s_{n_i}$. Then
$$\frac12d_{\K}^k(\n,\m)\le \|f_k(\n)-f_k(\m)\|_\infty\le d_{\K}^k(\n,\m)$$
for all $\n,\m \in [\N]^k$.
\end{prop}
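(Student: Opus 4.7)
The plan is to express the coordinates of $f_k(\n) - f_k(\m)$ in terms of the function $F = F_{\n,\m}$ that already appeared in the proof of Proposition~\ref{p:KaltonDistanceFormula}, and then to exploit the fact that $F(0)=0$ to bridge the sup-norm and the graph distance.

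First, I would note that the $j$-th coordinate of $s_n$ equals $1$ if $j\le n$ and $0$ otherwise. Consequently the $j$-th coordinate of $f_k(\n)$ is $|\{i\le k:n_i\ge j\}|=k-|\n\cap[1,j-1]|$, with the convention $|\n\cap[1,0]|=0$. Subtracting the analogous identity for $\m$, the $j$-th coordinate of $f_k(\n)-f_k(\m)$ equals
$$|\m\cap[1,j-1]|-|\n\cap[1,j-1]|=-F(j-1).$$
Since this vector has only finitely many non-zero coordinates, this gives
$$\|f_k(\n)-f_k(\m)\|_\infty=\max_{i\ge 0}|F(i)|.$$

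Next, the Fact established inside the proof of Proposition~\ref{p:KaltonDistanceFormula} gives $\dk(\n,\m)=\max F-\min F$. Because $F(0)=0$, both $\max F\ge 0$ and $-\min F\ge 0$, hence
$$\max_i|F(i)|\;=\;\max\bigl(\max F,\,-\min F\bigr)\;\le\;\max F-\min F\;\le\;2\max_i|F(i)|.$$
Combined with the previous display, this is exactly the asserted pair of inequalities. I do not foresee any serious obstacle: the only non-routine step is recognising the coordinates of $f_k(\n)-f_k(\m)$ as values of $F$, after which the proposition reduces to the elementary comparison between the sup-norm and the oscillation of a sequence that vanishes at the origin.
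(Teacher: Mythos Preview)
Your argument is correct and is essentially the same as the paper's: you identify the coordinates of $f_k(\n)-f_k(\m)$ with (the negatives of) the values of $F_{\n,\m}$, then use $F(0)=0$ to compare $\max|F|$ with $\max F-\min F=\dk(\n,\m)$. The paper states the same identity in the form $\dk(\n,\m)=\max(f_k(\n)-f_k(\m))-\min(f_k(\n)-f_k(\m))$ and invokes $\min\le 0\le\max$, which is exactly your $F(0)=0$ observation; you are simply more explicit about the intermediate computation.
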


\begin{proof}
Since $\dk=d$, one can show (as in the Fact in the proof of Proposition~\ref{p:KaltonDistanceFormula}) that
$\dk(\n,\m)=\max(f_k(\n)-f_k(\m))-\min(f_k(\n)-f_k(\m)).$
The result then follows easily since $\min(f_k(\n)-f_k(\m))\leq 0\leq \max(f_k(\n)-f_k(\m))$ for all $\n,\m\in [\N]^k$.
\end{proof}

\begin{remark} We already explained that $c_0$ cannot coarsely embed into any Banach space with property $\mathcal Q$ (in particular into any reflexive Banach space) and that Kalton even showed with additional arguments that if $c_0$ coarsely embeds into a separable Banach space $X$, then one of the iterated duals of $X$ has to be non separable. An inspection of his proof shows that 
the uniformly discrete metric spaces
\[
 M_k=\set{\sum_{i=1}^k s_{n_i} \times \indicator{A}: (n_1,\ldots,n_k) \in [\N]^k, A \in [\N]^\omega} \subset c_0
\]
do not equi-coarsely embed into any Banach space $X$ such that $X^{(r)}$ is separable for all $r$.
See Theorem \ref{kalton} below for more on this subject.
\end{remark}

Studying further the property $\mathcal Q$ in \cite{kaltonpropertyq}, Kalton exhibited non reflexive quasi-reflexive spaces with the property $\mathcal Q$ but showed that $\J$ and $\J^*$ fail property $\mathcal Q$. It is worth noticing that a theorem of Schoenberg  \cite{Schoenberg} implies that $\ell_1$ coarsely embeds into $\ell_2$, and therefore $\ell_1$ provides a simple example of a non-reflexive Banach space with property $\mathcal Q$.

\medskip
We conclude this section with two propositions that we state here for future reference. We start with a classical version of Ramsey's theorem.
\begin{prop}[Corollary 1.2 in \cite{Gowers}] \label{ramsey}
Let $(K,d)$ be a compact metric space, $k \in \N$ and $f \colon [\N]^k \to K$. Then for every $\ep >0$, there exists an infinite subset $\M$ of $\N$ such that $d(f(\overline{n}),f(\overline{m}))< \ep$ for every $\overline{n},\overline{m} \in [\M]^k$.
\end{prop}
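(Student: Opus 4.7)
The statement is a standard application of the infinite Ramsey theorem combined with the total boundedness of a compact metric space, so the plan is short and direct.

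First, I would use the compactness of $(K,d)$ to fix a finite cover of $K$ by open balls of radius $\ep/2$: choose points $y_1,\ldots,y_N \in K$ so that $K = \bigcup_{i=1}^N B(y_i,\ep/2)$. This turns the target into a finite object, which is what makes Ramsey-type results applicable. Next, I would define a finite coloring $c : [\N]^k \to \{1,\ldots,N\}$ of the $k$-subsets of $\N$ by letting $c(\overline{n})$ be the smallest index $i$ such that $f(\overline{n}) \in B(y_i,\ep/2)$ (some choice is needed since the balls may overlap; the smallest-index convention makes $c$ well-defined).

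Then I would invoke the infinite Ramsey theorem: any finite coloring of $[\N]^k$ admits an infinite subset $\M \subset \N$ on which the coloring is constant. Concretely, there exist $\M \in [\N]^\omega$ and $i_0 \in \{1,\ldots,N\}$ such that $c(\overline{n}) = i_0$ for every $\overline{n} \in [\M]^k$. Consequently, $f([\M]^k) \subset B(y_{i_0},\ep/2)$, and for any $\overline{n},\overline{m} \in [\M]^k$ the triangle inequality gives
\[
d(f(\overline{n}),f(\overline{m})) \le d(f(\overline{n}),y_{i_0}) + d(y_{i_0},f(\overline{m})) < \ep/2 + \ep/2 = \ep,
\]
which is the desired conclusion.

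There is essentially no obstacle here: compactness supplies the finite cover, and Ramsey's theorem (which is being quoted from Gowers's paper and may itself be taken as a black box) does the rest. The only minor point worth mentioning in a write-up is the well-definedness of the coloring, which I would handle by the minimal-index convention above rather than by appealing to a measurable selector or a disjointification of the cover.
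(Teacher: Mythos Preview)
Your argument is correct and is exactly the standard proof of this fact: total boundedness gives a finite cover by $\ep/2$-balls, a minimal-index rule yields a finite coloring of $[\N]^k$, and the infinite Ramsey theorem produces an infinite monochromatic $\M$, after which the triangle inequality finishes. There is nothing to compare against in the paper itself, since the proposition is stated there without proof and simply attributed to Corollary~1.2 in \cite{Gowers}; your write-up is precisely the routine justification one would expect for this citation.
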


For a Banach space $X$, we call \emph{tree of height $k$} in $X$ any family $(x(\n))_{\n\in[\N]^{\le k}}$, with $x(\n)\in X$. Then, if $\M \in [\N]^\omega$, $(x(\n))_{\n\in[\M]^{\le k}}$ will be called a \emph{full subtree} of $(x(\n))_{\n\in[\N]^{\le k}}$. A tree $(x^*(\n))_{\n\in[\M]^{\le k}}$ in $X^*$ is called \emph{weak$^*$-null} if for any $\n \in [\M]^{\le k-1}$, the sequence $(x^*(n_1,\ldots,n_{k-1},t))_{t>n_{k-1},t\in \M}$ is weak$^*$-null.

The next  proposition is
based on a weak$^*$-compactness argument and will be crucial for our proofs. Although the distance considered on $[\N]^k$ is different, the proof follows the same lines as Lemma 4.1 in \cite{blms}. We therefore state it now without further detail.

\begin{prop}\label{nulltree}  Let $X$ be a separable Banach space,  $k\in\N$,  and $f:([\N]^k,d_{\K}^k)\to X^*$  a Lipschitz map. Then there exist $\M \in [\N]^\omega$ and a weak$^*$-null tree $(x^*(\m))_{\m\in[\M]^{\le k}}$ in $X^*$ with $\|x^*_{\m}\| \leq \omega_f(1)$ for all $\m\in [\M]^{\leq k}\setminus\{\emptyset\}$ and so that
$$\forall \n \in [\M]^k,\ f(\n)=\sum_{i=0}^k x^*(n_1,\ldots,n_i)=\sum_{\m \preceq \n} x^*(\m).$$
\end{prop}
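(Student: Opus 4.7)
Set $L := \omega_f(1)$, which equals the Lipschitz constant of $f$ because $d_{\K}^k$ is a graph distance. The strategy is to obtain the tree as successive differences of iterated weak$^*$-limits of $f$, with $\M$ produced by a nested diagonal extraction; this is feasible because bounded subsets of $X^*$ are weak$^*$-sequentially compact and weak$^*$-metrizable by the separability of $X$. A preliminary observation: any two elements of $[\N]^k$ differing in exactly one coordinate are adjacent in the Kalton graph, hence their $f$-images are at norm-distance at most $L$.

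I construct by descending induction on $i$ from $k$ down to $0$ a chain $\N =: \M_k \supset \M_{k-1} \supset \cdots \supset \M_0 =: \M$ of infinite sets and maps $g_i : [\M_i]^i \to X^*$, with $g_k := f$, such that for every $i < k$ and every $\m \in [\M_i]^i$,
$$g_i(\m) \ =\ w^*\text{-}\lim_{t \in \M_i,\ t > m_i} g_{i+1}(\m, t).$$
To pass from $\M_{i+1}$ to $\M_i$, enumerate $\M_{i+1} = \{t_1 < t_2 < \cdots\}$ and pick inductively $t_n$ and nested infinite $T_n \subset T_{n-1}$ with $\min T_n > t_n$, such that for every $\m$ in the finite set $[\{t_1,\ldots,t_{n-1}\}]^i$ the sequence $(g_{i+1}(\m,t))_{t \in T_n}$ converges weak$^*$; this is possible since only finitely many conditions are imposed at each step and each relevant sequence is bounded in $X^*$ (by the inductive one-coordinate bound below). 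Setting $\M_i := \{t_n : n \in \N\}$, the weak$^*$-limit along the tail $\M_i \cap (m_i,\infty)$ agrees with the one along $T_n$ for $n$ large enough, so $g_i$ is well defined on $[\M_i]^i$, and hence also on $[\M]^i$ since $\M \subset \M_i$ and subsequences preserve weak$^*$-limits.

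Now define $x^*(\emptyset) := g_0(\emptyset)$ and, for $\m \in [\M]^i$ with $1 \le i \le k$,
$$x^*(\m) := g_i(m_1,\ldots,m_i) - g_{i-1}(m_1,\ldots,m_{i-1}).$$
The identity $f(\n) = \sum_{\m \preceq \n} x^*(\m)$ is immediate by telescoping and $g_k = f$. The weak$^*$-null property is equally immediate: for $\m \in [\M]^i$ with $i \le k-1$ and $t \in \M$, $t > m_i$,
$$x^*(\m,t) = g_{i+1}(\m,t) - g_i(\m) \xrightarrow{w^*} 0 \quad \text{as } t \to \infty,$$
by the very definition of $g_i$.

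For the norm bound, an auxiliary induction on $k - i$ establishes that each $g_i$ satisfies $\|g_i(\n) - g_i(\m)\| \le L$ whenever $\n,\m \in [\M]^i$ differ in a single coordinate. The base case $i = k$ is the adjacency observation above; the inductive step follows from $g_i(\n) - g_i(\m) = w^*\text{-}\lim_t [g_{i+1}(\n,t) - g_{i+1}(\m,t)]$ and the weak$^*$-lower semi-continuity of the norm. Applied to the pair $(m_1,\ldots,m_i)$ and $(m_1,\ldots,m_{i-1},t)$, this yields
$$\|x^*(\m)\| \le \liminf_{t \in \M,\, t > m_i} \|g_i(m_1,\ldots,m_i) - g_i(m_1,\ldots,m_{i-1},t)\| \le L$$
for $\m \in [\M]^i$ with $i \ge 1$. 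The main obstacle is carrying out the nested diagonal extractions cleanly so that the $g_i$'s are simultaneously well defined on $[\M]^i$ and the one-coordinate norm bounds propagate through the tower; this is the standard weak$^*$ version of the argument in Lemma~4.1 of \cite{blms}.
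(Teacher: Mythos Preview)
Your proof is correct and follows precisely the approach the paper intends: the paper does not spell out the argument but states that it ``follows the same lines as Lemma~4.1 in \cite{blms}'', which is exactly the iterated weak$^*$-limit/diagonal-extraction scheme you carry out, with the one-coordinate adjacency observation supplying the Lipschitz bound on the tree elements. The only cosmetic point is that the construction of $\M_i$ and the propagation of the one-coordinate bound for $g_i$ are really a single interleaved induction (you need the bound for $g_{i+1}$ to guarantee boundedness before extracting $\M_i$), but you acknowledge this and it causes no difficulty.
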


\section{Uniform asymptotic properties of norms and related estimates}
\label{asymptotics}

We recall the definitions that will be considered in this paper. For a Banach space $(X,\|\ \|)$ we
denote by $B_X$ the closed unit ball of $X$ and by $S_X$ its unit
sphere. The following definitions are due to V. Milman \cite{Milman} and we adopt the notation from \cite{JohnsonLindenstraussPreissSchechtman2002}. For $t\in [0,\infty)$ we define $$\overline{\rho}_X(t)=\sup_{x\in S_X}\inf_{Y}\sup_{y\in S_Y}\big(\|x+t y\|-1\big),$$
where $Y$ runs through all closed subspaces of $X$ of finite codimension. Then, the norm $\|\ \|$ is said to be {\it asymptotically uniformly smooth} (in short AUS) if
$$\lim_{t \to 0}\frac{\overline{\rho}_X(t)}{t}=0.$$
For $p\in (1,\infty)$ it is said to be {\it $p$-asymptotically uniformly smooth} (in short $p$-AUS) if there exists $c>0$ such that for all $t\in [0,\infty)$, $\overline{\rho}_X(t)\le ct^p$.

We will also need the dual modulus defined 
by
$$ \overline{\delta}_X^*(t)=\inf_{x^*\in S_{X^*}}\sup_{E}\inf_{y^*\in S_E}\big(\|x^*+ty^*\|-1\big),$$
where $E$ runs through all finite-codimensional weak$^*$-closed subspaces of~$X^*$. 
The norm of $X^*$ is said to be {\it weak$^*$ asymptotically uniformly convex} (in short AUC$^*$) if $\overline{\delta}_X^*(t)>0$ for all $t$ in $(0,\infty)$. If there exists $c>0$ and $q\in [1,\infty)$ such that for all $t\in [0,1]$ $\overline{\delta}_X^*(t)\ge ct^q$, we say that the norm of $X^*$ is $q$-AUC$^*$.
The following proposition is elementary.

\begin{prop}\label{as-sequences} Let $X$ be a Banach space. For any $t\in (0,1)$, any weakly null sequence $(x_n)_{n=1}^\infty$ in $B_{X}$ and any  $x \in S_X$ we have:
$$ \limsup_{n \rightarrow \infty} \| x+tx_n \| \leq 1 + \overline{\rho}_{X}(t).$$

For any weak$^*$-null sequence $(x^*_n)_{n=1}^\infty \subset X^*$ and for any $x^* \in X^*\setminus \set{0}$ we have
\[
 \limsup_{n\to \infty} \norm{x^*+x^*_n} \geq \norm{x^*}\left(1+\overline{\delta}_X^*\left(\frac{\limsup \norm{x_n^*}}{\norm{x^*}}\right)\right).
\]

\end{prop}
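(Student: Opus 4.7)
The plan is to prove both inequalities by the same template: first I would use the definition of the relevant asymptotic modulus to extract a finite-codimensional (weak$^*$-closed, in the second part) subspace on which the modulus is nearly attained, and then I would exploit the weak (resp.\ weak$^*$) convergence of the sequence to approximately project the sequence into that subspace.

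For the first inequality, I would fix $\varepsilon>0$ and, by the definition of $\overline{\rho}_X(t)$, pick a closed subspace $Y\subset X$ of finite codimension with $\|x+ty\|\le 1+\overline{\rho}_X(t)+\varepsilon$ for every $y\in S_Y$. A finite-dimensional algebraic complement $F$ of $Y$ with associated projection $P\colon X\to F$ then provides a decomposition $x_n=y_n+P(x_n)$ with $y_n\in Y$; since $F$ is finite-dimensional and $(x_n)$ is weakly null, $\|P(x_n)\|\to 0$, so $\limsup_n\|y_n\|\le 1$. Writing $y_n=\alpha_n u_n$ with $\alpha_n=\|y_n\|$ and $u_n\in S_Y$ (handling $y_n=0$ trivially), I would split into cases $\alpha_n\le 1$ (handled by the convex estimate $\|x+t\alpha_n u_n\|\le(1-\alpha_n)+\alpha_n\|x+tu_n\|\le 1+\overline{\rho}_X(t)+\varepsilon$) and $\alpha_n>1$ (using $\|x+t\alpha_n u_n\|\le\|x+tu_n\|+t(\alpha_n-1)$, noting $(\alpha_n-1)^+\to 0$). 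Combined with $\|x+tx_n\|\le\|x+ty_n\|+t\|P(x_n)\|$ and letting $\varepsilon\to 0$, this closes the first part.

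For the second inequality, I would first reduce to $x^*\in S_{X^*}$ by dividing through by $\|x^*\|$; with $s:=\limsup_n\|x_n^*\|$ (after this normalization), the claim becomes $\limsup\|x^*+x_n^*\|\ge 1+\overline{\delta}_X^*(s)$. The case $s=0$ is dispatched by weak$^*$-lower semicontinuity of the norm. For $s>0$, fixing $\varepsilon>0$, the definition of $\overline{\delta}_X^*(s)$ supplies a weak$^*$-closed finite-codimensional subspace $E\subset X^*$ with $\|x^*+sy^*\|\ge 1+\overline{\delta}_X^*(s)-\varepsilon$ for every $y^*\in S_E$. Because $E$ is weak$^*$-closed and of finite codimension, it is the annihilator of some finite set $\{x_1,\ldots,x_N\}\subset X$, so weak$^*$-nullity of $(x_n^*)$ gives $x_n^*(x_i)\to 0$ for each $i$; since $X^*/E$ is finite-dimensional this forces $d(x_n^*,E)\to 0$. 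I would then pick $y_n^*\in E$ with $\|x_n^*-y_n^*\|\to 0$, pass to a subsequence along which $\|y_n^*\|\to s$, and apply the estimate on $E$ to $y_n^*/\|y_n^*\|\in S_E$; a small scaling perturbation ($|s-\|y_n^*\||\to 0$) yields $\|x^*+y_n^*\|\ge 1+\overline{\delta}_X^*(s)-\varepsilon-o(1)$, and the triangle inequality $\|x^*+x_n^*\|\ge\|x^*+y_n^*\|-\|x_n^*-y_n^*\|$ together with $\varepsilon\to 0$ finishes the proof.

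The main technical step — and the one I expect to require most care — is the passage from ``unit vectors in the chosen subspace'' to the actual sequence, which does not lie in that subspace but is asymptotically so. This is always achieved by observing that weak (resp.\ weak$^*$) convergence translates into norm convergence in the finite-dimensional quotient $X/Y$ (resp.\ $X^*/E$); this is the only place where the hypothesis on $(x_n)$ (resp.\ $(x_n^*)$) is used. The remaining estimates are routine convex/scaling manipulations standard in the theory of asymptotic moduli, so no serious obstacle is expected beyond this approximate projection step.
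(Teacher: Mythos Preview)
Your argument is correct. The paper does not actually give a proof of this proposition: it simply declares it ``elementary'' and moves on. Your write-up supplies exactly the standard details one would fill in --- choosing a finite-codimensional (weak$^*$-closed) subspace witnessing the modulus up to~$\varepsilon$, observing that weak (resp.\ weak$^*$) nullity becomes norm nullity in the finite-dimensional quotient, and then handling the small scaling discrepancy by a convexity/triangle-inequality argument --- so there is nothing to compare and nothing to fix.
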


We will also need the following refinement (see Proposition 2.1 in \cite{LancienRaja2018}).

\begin{prop}\label{waus}
Let $X$ be a Banach space. 
Then the bidual norm on $X^{**}$ has the following property. 
For any $t\in (0,1)$, any weak$^*$-null sequence $(x^{**}_n)_{n=1}^\infty$ in $B_{X^{**}}$ and any  $x \in S_X$ we have:
$$ \limsup_{n \rightarrow \infty} \| x+tx^{**}_n \| \leq 1 + \overline{\rho}_{X}(t).$$
\end{prop}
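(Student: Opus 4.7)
The plan is to transfer the bound from $X^{**}$ down to $X$ using the principle of local reflexivity (PLR), where the key idea is to choose as test functionals in PLR precisely the functionals that define the finite-codimensional subspace realising $\overline{\rho}_X(t)$.

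Fix $t\in(0,1)$, $x\in S_X$, a weak$^*$-null sequence $(x_n^{**})$ in $B_{X^{**}}$, and $\epsilon>0$. By definition of $\overline{\rho}_X(t)$, pick a closed finite-codimensional subspace $Y=\bigcap_{i=1}^m\ker x_i^*\subset X$ with $x_i^*\in X^*$ such that $\sup_{y\in B_Y}\|x+ty\|\le 1+\overline{\rho}_X(t)+\epsilon$ (the supremum over $B_Y$ equals the one over $S_Y\cup\{0\}$ by convexity of the norm and $\|x\|=1$). Since $(x_n^{**})$ is weak$^*$-null in $X^{**}$, we have $x_n^{**}(x_i^*)\to 0$ for each $i\le m$.

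For each $n$, apply PLR to $E_n=\mathrm{span}\{x,x_n^{**}\}\subset X^{**}$ and to $F=\mathrm{span}\{x_i^*\}_{i\le m}\subset X^*$ with tolerance $\eta_n\to 0^+$, obtaining a linear map $T_n\colon E_n\to X$ satisfying $T_n(x)=x$, $(1+\eta_n)^{-1}\|u\|\le\|T_nu\|\le(1+\eta_n)\|u\|$, and $x_i^*(T_nu)=u(x_i^*)$ for every $u\in E_n$ and $i\le m$. Set $y_n:=T_n(x_n^{**})\in X$. Then $\|y_n\|\le 1+\eta_n$, $x_i^*(y_n)=x_n^{**}(x_i^*)\to 0$ for each $i$, and since $Y^\perp=\mathrm{span}\{x_i^*\}_{i\le m}$ is finite-dimensional these conditions force $\mathrm{dist}(y_n,Y)\to 0$. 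Pick $w_n\in Y$ with $\|y_n-w_n\|\to 0$; then $\|w_n\|\le 1+o(1)$, so $w_n/\max(1,\|w_n\|)\in B_Y$ and the chosen bound over $B_Y$ combined with the triangle inequality yields $\|x+tw_n\|\le 1+\overline{\rho}_X(t)+\epsilon+o(1)$.

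Combining everything,
$$\|x+tx_n^{**}\|_{X^{**}}\le (1+\eta_n)\|x+ty_n\|_X\le (1+\eta_n)\bigl(\|x+tw_n\|+t\|y_n-w_n\|\bigr),$$
so $\limsup_n\|x+tx_n^{**}\|\le 1+\overline{\rho}_X(t)+\epsilon$, and letting $\epsilon\to 0$ finishes the proof. The only delicate point is invoking PLR so as to simultaneously almost-preserve $\|x+tx_n^{**}\|$ and to control the approximants $y_n\in X$ against the functionals $x_i^*$ defining $Y$; once $y_n$ is near $Y$ with $\|y_n\|\lesssim 1$, the rest is a routine convexity and triangle-inequality estimate.
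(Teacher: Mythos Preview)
Your argument via the principle of local reflexivity is correct: you choose the finite-codimensional $Y$ realising $\overline\rho_X(t)$ up to $\epsilon$, use PLR to pull each $x_n^{**}$ back to $y_n\in X$ while preserving both the norm of $x+tx_n^{**}$ (up to $1+\eta_n$) and the values of the functionals $x_i^*$ defining $Y$, and then a routine finite-dimensional argument in $X/Y$ shows $y_n$ is nearly in $B_Y$. Each step is justified; in particular the passage from $x_i^*(y_n)\to 0$ to $\mathrm{dist}(y_n,Y)\to 0$ is exactly the equivalence of norms on the finite-dimensional quotient $X/Y$, and the normalisation $w_n/\max(1,\|w_n\|)$ handles the possible overshoot $\|w_n\|>1$ at cost $o(1)$.

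Note, however, that the paper does not give its own proof of this proposition: it simply cites Proposition~2.1 of \cite{LancienRaja2018}. So there is no in-paper argument to compare against. The PLR approach you take is in fact the standard one for transferring asymptotic smoothness estimates from $X$ to weak$^*$-null sequences in $X^{**}$, and is essentially what one finds in the cited reference.
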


\medskip Let us now recall the following classical duality result concerning these moduli (see for instance \cite{DKLR} Corollary 2.3 for a precise statement).

\begin{prop}\label{duality} Let $X$ be a Banach space. 
Then $\|\ \|_X$ is AUS if and and only if $\|\ \|_{X^*}$ is AUC$^*$.

If $p,q\in (1,\infty)$ are conjugate exponents, then $\|\ \|_X$ is $p$-AUS if and and only if $\|\ \|_{X^*}$ is $q$-AUC$^*$.
\end{prop}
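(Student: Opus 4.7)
My plan is to deduce Proposition~\ref{duality} from a Legendre-transform style duality between $\overline{\rho}_X$ and $\overline{\delta}_{X^*}^*$, namely
\[
\overline{\delta}_{X^*}^*(t) \ge \frac{1+st}{1+\overline{\rho}_X(s)}-1 \quad\text{and}\quad \overline{\rho}_X(s) \le \frac{1+st}{1+\overline{\delta}_{X^*}^*(t)}-1
\]
for all $s,t\ge 0$. Once these are in hand, both the qualitative equivalence AUS $\Leftrightarrow$ AUC$^*$ and the quantitative one $p$-AUS $\Leftrightarrow$ $q$-AUC$^*$ (with $1/p+1/q=1$) follow by elementary optimization in $s$: maximizing $st-Cs^p$ at $s_*=(t/pC)^{1/(p-1)}$ yields the Young-type constant $c'=1/(qp^{q-1}C^{q-1})$ in front of $t^q$, and the same computation runs in reverse. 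The qualitative statement requires only that $\overline{\rho}_X(s)/s\to 0$ implies $\overline{\delta}_{X^*}^*(t)>0$ for all $t>0$, which is immediate from the same bound.

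For the first inequality, fix $x^*\in S_{X^*}$ and $\varepsilon>0$. Pick $x_0\in S_X$ with $x^*(x_0)>1-\varepsilon$ and use the definition of $\overline{\rho}_X$ at $x_0$ to produce a finite-codimensional $Y\subset X$ with $\sup_{y\in S_Y}\|x_0+sy\|\le 1+\overline{\rho}_X(s)+\varepsilon$. Refine to $Y':=Y\cap\ker x^*$, which is still finite-codimensional and inherits the same bound. Choose a finite-dimensional $F\subset X$ containing $x_0$ and biorthogonal (up to $\varepsilon$) to a basis of the finite-dimensional quotient $X/Y'$, so that $X=F\oplus Y'$ with bounded projection; set $E:=F^\perp\subset X^*$, which is weak$^*$-closed and finite-codimensional. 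For $y^*\in S_E$ the bounded projection yields $\tilde y\in S_{Y'}$ with $y^*(\tilde y)>1-\delta$, and the key computation
\[
(x^*+ty^*)(x_0+s\tilde y) = x^*(x_0) + s\,x^*(\tilde y) + t\,y^*(x_0) + st\,y^*(\tilde y) > (1-\varepsilon)+st(1-\delta)
\]
holds because the two middle terms vanish ($\tilde y\in\ker x^*$ and $x_0\in F=E_\perp$). Dividing by $\|x_0+s\tilde y\|\le 1+\overline{\rho}_X(s)+\varepsilon$ and letting $\varepsilon,\delta\to 0$ produces the sought lower bound on $\|x^*+ty^*\|$.

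The reverse inequality is obtained by a dual argument: given $x\in S_X$, pick a norming $x_0^*\in S_{X^*}$, invoke the definition of $\overline{\delta}_{X^*}^*$ at $x_0^*$ to get a weak$^*$-closed finite-codimensional $E\subset X^*$ realizing the AUC$^*$ lower bound, and set $Y$ to be a finite-codimensional subspace of $X$ compatible with $E_\perp$ and lying in $\ker x_0^*$ up to a perturbation. A parallel norming-functional computation, combined with a weak$^*$-compactness argument in $X^{**}$ using Proposition~\ref{waus}, then upgrades the AUC$^*$ lower bound on dual norms into the desired AUS upper bound on $\|x+sy\|$ for $y\in S_Y$. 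The main obstacle in this plan is the construction, in both directions, of a finite-dimensional complement $F$ with uniformly bounded projection $X=F\oplus Y'$; this is where a principle-of-local-reflexivity style perturbation is needed and accounts for the $\varepsilon$-slack in the final constants. Everything else is elementary convexity together with the Young optimization of the first paragraph.
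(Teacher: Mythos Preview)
The paper does not prove Proposition~\ref{duality}; it records it as a classical duality and refers the reader to \cite{DKLR}, Corollary~2.3. So there is no proof in the paper to compare against, and your proposal must stand on its own.

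Your two displayed inequalities are mutually contradictory: the first rearranges to $(1+\overline{\rho}_X(s))(1+\overline{\delta}_{X^*}^*(t))\ge 1+st$, the second to $(1+\overline{\rho}_X(s))(1+\overline{\delta}_{X^*}^*(t))\le 1+st$. The second is simply false: in a Hilbert space $\overline{\rho}_X(s)=\sqrt{1+s^2}-1$ and $\overline{\delta}_{X^*}^*(t)=\sqrt{1+t^2}-1$, so at $s=1$, $t=2$ the product is $\sqrt{10}>3=1+st$. The first inequality, if established, yields only AUS $\Rightarrow$ AUC$^*$ (and $p$-AUS $\Rightarrow$ $q$-AUC$^*$); rearranging it gives a \emph{lower} bound on $\overline{\rho}_X$ in terms of $\overline{\delta}_{X^*}^*$, which is useless for the converse. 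A genuinely different inequality is required for AUC$^*\Rightarrow$ AUS, and you have not supplied one.

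Even for the first inequality your argument has a gap you flag but do not close. For $y^*\in S_E$ with $E=F^\perp$ you need $\tilde y\in S_{Y'}$ with $y^*(\tilde y)>1-\delta$. Since $y^*$ vanishes on $F$, one gets $\|y^*|_{Y'}\|\ge \|y^*\|/\|P\|$ where $P$ is the projection onto $Y'$ along $F$; but nothing forces $\|P\|$ close to $1$. Auerbach-type complements give only $\|P\|\le 1+\mathrm{codim}\,Y'$, and this codimension depends on $s$ and $\varepsilon$ through the choice of $Y$ in the AUS definition. The principle of local reflexivity concerns approximating finite-dimensional subspaces of $X^{**}$ inside $X$; it does not manufacture well-complemented finite-codimensional subspaces of $X$, so invoking it here does not close the gap. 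Finally, your sketch of the reverse direction via Proposition~\ref{waus} is not usable as stated: that proposition bounds $\|x+tx_n^{**}\|$ from \emph{above} via $\overline{\rho}_X$, whereas what is needed is an upper bound on $\overline{\rho}_X$ coming from $\overline{\delta}_{X^*}^*$.
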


We conclude this section with a list of a few classical properties of Orlicz functions and norms that are related to these moduli.
A map $\varphi:[0,\infty) \to [0,\infty)$ is called an \emph{Orlicz function} if it is continuous, non decreasing, convex and so that $\varphi(0)=0$ and $\lim_{t \to \infty}\varphi(t)=\infty$. 
The \emph{Orlicz norm} $\|\ \|_{\ell_\varphi}$, associated with  $\varphi$ is defined on $c_{00}$, the space of finitely supported sequences, as follows:
$$\forall x=(x_n)_{n=1}^\infty \in c_{00},\ \ \|x\|_{\ell_\varphi}=\inf\big\{r>0,\ \sum_{n=1}^\infty \varphi(x_n/r)\le 1\big\}.$$
The following is immediate from the definition.

\begin{lem}\label{Orlicz-lp} Let $\varphi:[0,\infty) \to [0,\infty)$ be an Orlicz function and $p\in [1,\infty)$.
\begin{enumerate}[(i)]
\item If there exists $C>0$ such that $\varphi(t)\le Ct^p$, for all $t\in [0,1]$, then there exists $A>0$ such that $\|x\|_{\ell_\varphi} \le A\|x\|_{\ell_p}$, for all $x\in c_{00}$.
\item If there exists $c>0$ such that $\varphi(t)\ge ct^p$, for all $t\in [0,1]$, then there exists $a>0$ such that $\|x\|_{\ell_\varphi} \ge a\|x\|_{\ell_p}$, for all $x\in c_{00}$.
\end{enumerate}
\end{lem}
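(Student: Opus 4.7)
For part (i), my plan is to use homogeneity and reduce to the normalized case $\|x\|_{\ell_p}=1$, which forces $|x_n|\le 1$ for all $n$. Setting $A=\max(C^{1/p},1)$ one then has $|x_n|/A\le 1$, so the hypothesis gives $\varphi(|x_n|/A)\le C(|x_n|/A)^p$; summing yields $\sum_n\varphi(|x_n|/A)\le C/A^p\le 1$, whence $\|x\|_{\ell_\varphi}\le A=A\|x\|_{\ell_p}$.

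For part (ii) I would set $r=\|x\|_{\ell_\varphi}$ and exploit the continuity of $\varphi$ together with the definition of the Orlicz norm to get $\sum_n\varphi(|x_n|/r)\le 1$; then I would like to apply $\varphi(t)\ge ct^p$ to each summand. The only genuine obstacle is that this bound is only assumed on $[0,1]$, while a priori some of the numbers $|x_n|/r$ may exceed $1$.

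To circumvent this I plan to first note that $\sum_n\varphi(|x_n|/r)\le 1$ forces $|x_n|/r\le M$ for every $n$, where $M:=\sup\{t\ge 0:\varphi(t)\le 1\}$ is a finite constant depending only on $\varphi$. If $M\le 1$, the hypothesis applies directly and $c\sum_n(|x_n|/r)^p\le 1$ gives $\|x\|_{\ell_p}\le c^{-1/p}r$. If $M>1$, I rescale by $1/M$: the convexity inequality $\varphi(\lambda t)\le\lambda\varphi(t)$ for $\lambda\in[0,1]$ (which follows from $\varphi(0)=0$) yields
\[
\sum_n\varphi\!\left(\frac{|x_n|}{Mr}\right)\le\frac{1}{M}\sum_n\varphi\!\left(\frac{|x_n|}{r}\right)\le\frac{1}{M},
\]
and now $|x_n|/(Mr)\le 1$, so the lower bound applies to each term and produces $\|x\|_{\ell_p}\le M^{(p-1)/p}c^{-1/p}r$. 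In both cases I arrive at $\|x\|_{\ell_\varphi}\ge a\|x\|_{\ell_p}$ with $a=c^{1/p}/\max(M,1)^{(p-1)/p}$, depending only on $c$, $p$, and $\varphi$.

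The whole argument is essentially mechanical; the only non-trivial idea is the convexity rescaling in part (ii), needed to bring all arguments back into $[0,1]$ where the assumed lower bound on $\varphi$ is usable.
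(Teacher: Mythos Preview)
Your proof is correct. The paper itself does not prove this lemma at all, merely declaring it ``immediate from the definition''; your argument supplies exactly the kind of routine details one would expect, with the only mildly non-obvious step being the convexity rescaling $\varphi(\lambda t)\le\lambda\varphi(t)$ in part~(ii) to pull the arguments back into $[0,1]$.
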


Assume now that $\varphi:[0,\infty) \to [0,\infty)$ is an Orlicz function which is 1-Lipschitz and such that $\lim_{t\to \infty}\varphi(t)/t=1$. 
Consider for $(s,t) \in \R^2$, 
\[
N_2^\varphi(s,t)=
\begin{cases}|s|+|s|\varphi(|t|/|s|) & \text{ if }s\neq 0,\\
 \abs{t}& \text{ if }s=0.
\end{cases}
\]
Then define by induction for all $n\geq 3$:
$$\forall (s_1,\ldots,s_n)\in \R^n,\ N_n^\varphi(s_1,\ldots,s_n)=
N_2^\varphi\big(N_{n-1}^\varphi(s_1,\ldots,s_{n-1}),s_n\big).$$
The following is proved in \cite{KaltonTAMS2013} (see Lemma 4.3 and its preparation).

\begin{lemma}\label{Orlicz-Kalton}\
\begin{enumerate}[(i)]
\item For any $n \ge 2$, the function $N_n^\varphi$ is an absolute (or lattice) norm on $\R^n$, meaning that $N_n(s_1,\ldots,s_n)\le N_n(t_1,\ldots,t_n)$, whenever $|s_i|\le |t_i|$ for all $i\le n$.
\item For any $n\in \N$ and any $x\in \R^n$:
$$\frac12 \|s\|_{\ell_\varphi} \le N_n^\varphi(s) \le e\|s\|_{\ell_\varphi}.$$
\end{enumerate}
\end{lemma}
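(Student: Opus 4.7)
I would prove (i) by induction on $n \ge 2$. The base case is the assertion that $N_2^\varphi$ is an absolute norm on $\R^2$. Positive $1$-homogeneity is immediate from the definition. The triangle inequality amounts to joint convexity of $N_2^\varphi$ on $\R^2$, and it reduces to joint convexity of the \emph{perspective} $(s,t) \in (0,\infty) \times [0,\infty) \mapsto s\varphi(t/s)$ of the convex function $\varphi$, extended continuously to the axis $s=0$ via the limit $s\varphi(t/s) \to t$ granted by $\varphi(r)/r \to 1$. The lattice property is then checked directly from the formula, using convexity and the $1$-Lipschitz behaviour of $\varphi$ at infinity to ensure monotonicity in $|s|$ (monotonicity in $|t|$ is immediate since $\varphi$ is non-decreasing). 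For the inductive step, I use the general principle that composing absolute norms yields an absolute norm: if $M$ is an absolute norm on $\R^2$ and $N$ is an absolute norm on $\R^{n-1}$, then $(x,s_n) \mapsto M(N(x),s_n)$ is an absolute norm on $\R^n$, which handles positivity/homogeneity, the triangle inequality (via monotonicity of $M$ in each coordinate and the triangle inequality for $N$), and the lattice property in one go.

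For (ii), set $a_k := N_k^\varphi(s_1,\ldots,s_k)$ with the convention $a_1 := s_1$, so that the recursion rewrites as $a_k - a_{k-1} = a_{k-1}\varphi(s_k/a_{k-1})$ for $k \ge 2$. The lower bound $\|s\|_{\ell_\varphi} \le 2a_n$ will follow from the estimate $\sum_{k=1}^n \varphi(s_k/(2a_n)) \le 1$. For $k \ge 2$, convexity of $\varphi$ with $\varphi(0)=0$ gives the standard inequality $\varphi(\lambda x) \le \lambda \varphi(x)$ for $\lambda \in [0,1]$; applied with $\lambda = a_{k-1}/(2a_n) \le 1/2$ and $x = s_k/a_{k-1}$, summation yields
\[
\sum_{k=2}^n \varphi\!\left(\frac{s_k}{2a_n}\right) \le \frac{1}{2a_n}\sum_{k=2}^n a_{k-1}\varphi(s_k/a_{k-1}) = \frac{a_n - s_1}{2a_n} \le \frac{1}{2}.
\]
The $k=1$ term is $\le s_1/(2a_n) \le 1/2$ by the $1$-Lipschitz bound $\varphi(x) \le x$ and the fact $s_1 \le a_n$ (a consequence of the lattice property from (i)), and adding the two contributions finishes the claim.

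The upper bound $a_n \le e\|s\|_{\ell_\varphi}$ is the delicate part. Writing $r := \|s\|_{\ell_\varphi}$, so that $\sum_k \varphi(s_k/r) \le 1$, the starting point is the multiplicative estimate $a_k \le a_{k-1}\exp(\varphi(s_k/a_{k-1}))$ (from $1+x \le e^x$), which telescopes into $a_n \le a_1 \exp\!\left(\sum_{k=2}^n \varphi(s_k/a_{k-1})\right)$. Whenever $a_{k-1} \ge r$, monotonicity of $\varphi$ gives $\varphi(s_k/a_{k-1}) \le \varphi(s_k/r)$ at no cost, so those terms are controlled by the bound $1$. The main obstacle I anticipate is the \emph{pre-threshold} regime $a_{k-1} < r$, where this monotonicity step fails; there one must instead rely on the $1$-Lipschitz bound $\varphi(x) \le x$ (which gives the complementary additive estimate $a_k \le a_{k-1} + s_k$) together with the normalization $\varphi(t)/t \to 1$ at infinity, and then combine the two regimes carefully so that the accumulated exponent stays at most $1$, in order to land on the sharp constant $e$ rather than a larger one.
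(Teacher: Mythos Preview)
The paper does not prove this lemma; it simply records it as a consequence of Lemma~4.3 in \cite{KaltonTAMS2013}. So there is no in-paper argument to compare against, and I assess yours on its own terms.

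Your treatment of (i) has a genuine gap at the monotonicity in $|s|$. The perspective argument gives convexity of $(s,t)\mapsto s+s\varphi(t/s)$ only on the half-plane $s>0$; promoting this to a norm on all of $\R^2$ for $(s,t)\mapsto |s|+|s|\varphi(|t|/|s|)$ requires, in addition, that $s\mapsto s+s\varphi(t/s)$ be non-decreasing on $(0,\infty)$, and that is exactly the step you assert follows from ``convexity and the $1$-Lipschitz behaviour of $\varphi$ at infinity''. It does not follow from the hypotheses as stated in the paper. Take $\varphi(x)=0$ on $[0,1]$ and $\varphi(x)=(\sqrt{x}-1)^2$ on $[1,\infty)$: this is an Orlicz function, it is $1$-Lipschitz, and $\varphi(t)/t\to 1$, yet $N_2^\varphi(\tfrac1{100},4)=\tfrac{1}{100}(1+361)=3.62>2=N_2^\varphi(1,4)$, so the lattice property fails; decomposing $(\tfrac1{100},4)=(1,4)+(-\tfrac{99}{100},0)$ then shows the triangle inequality fails as well. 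What is really needed is a condition of the type $\varphi(t)\ge t-1$ (equivalently $y\varphi'(y)-\varphi(y)\le 1$, which gives $\partial_s N_2^\varphi\ge 0$); this holds for the modulus $\overline{\rho}_X$ used downstream, but it is not among the hypotheses listed here, and your sketch does not supply it.

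For (ii), your lower-bound argument is clean and correct. You explicitly leave the upper bound unfinished, flagging the ``pre-threshold'' regime $a_{k-1}<r$ as the obstacle. Once the lattice property is secured, that obstacle disappears: observe $N_n^\varphi(s_1,\ldots,s_n)=N_{n+1}^\varphi(0,s_1,\ldots,s_n)\le N_{n+1}^\varphi(r,s_1,\ldots,s_n)$ with $r=\|s\|_{\ell_\varphi}$; the right-hand recursion starts at $r$, so every partial value stays $\ge r$, and your multiplicative telescoping then gives $N_{n+1}^\varphi(r,s_1,\ldots,s_n)\le r\exp\bigl(\sum_k\varphi(|s_k|/r)\bigr)\le er$ with no regime split at all.
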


When $X$ is a Banach space, it is easy to see that $\overline{\rho}_X$ is a 1-Lipschitz Orlicz function such that $\lim_{t\to \infty}\rho(t)/t=1$.
But due to its lack of convexity, $\overline{\delta}_X^*$ is not an Orlicz function and we need to modify it. Following \cite{KaltonTAMS2013}, we define
$$\delta(t)=\int_0^t \frac{\overline{\delta}_X^*(s)}{s}\,ds.$$
It is easy to  see that $\overline{\delta}_X^*(t)/{t}$ is increasing and tends to $1$ as $t$ tends to $\infty$. Therefore, $\delta$ is an Orlicz function which is 1-Lipschitz, such that $\lim_{t\to \infty}\delta(t)/t=1$ and satisfying:
$$\forall t\in [0,\infty),\ \ \overline{\delta}_X^*(t/2) \le \delta(t) \le \overline{\delta}_X^*(t).$$
The following statement is now a direct consequence of Lemmas \ref{Orlicz-lp} and \ref{Orlicz-Kalton}.

\begin{lem}\label{Nnorm-lp} Let $X$ be a Banach space and $p\in [1,\infty)$.
\begin{enumerate}[(i)]
\item If there exists $C>0$ such that $\overline{\rho}_X(x)\le Ct^p$, for all $t\in [0,1]$, then there exists $A>0$ such that
$$\forall n\in \N\ \forall x\in \R^n,\ \ N_n^{\overline{\rho}_X}(x)\le A\|x\|_{\ell_p^n}.$$
\item If there exists $c>0$ such that $\overline{\delta}_X^*(t)\ge ct^p$, for all $t\in [0,1]$, then there exists $a>0$ such that
$$\forall n\in \N\ \forall x\in \R^n,\ \ N_n^{\delta}(x)\ge a\|x\|_{\ell_p^n}.$$
\end{enumerate}
\end{lem}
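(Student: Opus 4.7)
The plan is to obtain both parts as a short chain of three observations: translate the polynomial bound on the modulus into a polynomial bound on the relevant Orlicz function, apply Lemma~\ref{Orlicz-lp} to pass from the Orlicz norm $\|\cdot\|_{\ell_\varphi}$ to the $\ell_p$-norm, and finally use Lemma~\ref{Orlicz-Kalton}(ii) to pass between $N_n^\varphi$ and $\|\cdot\|_{\ell_\varphi}$. Since $\overline{\rho}_X$ and $\delta$ have already been observed in the excerpt to be $1$-Lipschitz Orlicz functions satisfying $\lim_{t\to\infty}\varphi(t)/t=1$, both invoked lemmas apply without further verification.

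For part (i), I would take $\varphi=\overline{\rho}_X$. The hypothesis $\overline{\rho}_X(t)\le Ct^p$ on $[0,1]$ is exactly the pointwise assumption of Lemma~\ref{Orlicz-lp}(i), which yields a constant $A'>0$ such that $\|x\|_{\ell_{\overline{\rho}_X}}\le A'\|x\|_{\ell_p}$ for every finitely supported $x$. Combining this with the upper estimate $N_n^{\overline{\rho}_X}(x)\le e\|x\|_{\ell_{\overline{\rho}_X}}$ from Lemma~\ref{Orlicz-Kalton}(ii), the constant $A=eA'$ does the job.

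For part (ii), one preliminary step is needed because the hypothesis is stated on $\overline{\delta}_X^*$ while the norm $N_n^\delta$ uses its convexification $\delta$. Assuming $\overline{\delta}_X^*(s)\ge cs^p$ on $[0,1]$, the defining integral
$$\delta(t)=\int_0^t\frac{\overline{\delta}_X^*(s)}{s}\,ds\ge \int_0^t cs^{p-1}\,ds=\frac{c}{p}\,t^p$$
holds for all $t\in[0,1]$, which places $\delta$ in the setting of Lemma~\ref{Orlicz-lp}(ii) and produces $a'>0$ with $\|x\|_{\ell_\delta}\ge a'\|x\|_{\ell_p}$. Lemma~\ref{Orlicz-Kalton}(ii) then gives $N_n^\delta(x)\ge \tfrac{1}{2}\|x\|_{\ell_\delta}$, so $a=a'/2$ works uniformly in $n$. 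There is no substantive obstacle here; the only non-bookkeeping step is the integration bound that transports the $t^p$ lower estimate from $\overline{\delta}_X^*$ to $\delta$, and this is elementary.
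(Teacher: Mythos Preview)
Your proof is correct and follows exactly the route the paper indicates: the paper states only that the lemma is ``a direct consequence of Lemmas~\ref{Orlicz-lp} and~\ref{Orlicz-Kalton}'' without further detail, and you have supplied precisely those details. The only minor variation is that for part~(ii) you obtain $\delta(t)\ge (c/p)t^p$ by direct integration, whereas the paper had already recorded the inequality $\delta(t)\ge \overline{\delta}_X^*(t/2)$, which would give $\delta(t)\ge c\,2^{-p}t^p$; both routes are equally valid and lead to the same conclusion.
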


We will also use the following reformulation of Propositions~\ref{as-sequences} and~\ref{waus} in terms of the norms $N_2^\delta$ and $N_2^{\overline{\rho}_X}$.
\begin{lem}\label{l:asymptotic-Nnorm}
 Let $X$ be a Banach space. 
 \begin{enumerate}[(i)]
 \item  Let $(x_n^*) \subset X^*$ be weak$^*$-null. Then for  any $x^* \in X^*$ we have 
 \[
  \limsup_{n\to \infty} \norm{x^*+x_n^*}\geq N_2^\delta(\norm{x^*},\limsup \norm{x_n^*}).
 \]
 \item Similarly, if $(x_n^{**}) \subset X^{**}$ is weak$^*$-null and $x\in X$, then 
$$ \liminf_{n \rightarrow \infty} \| x+x^{**}_n \| \leq N_2^{\overline{\rho}_X}(\norm{x},\liminf \norm{x_n^{**}}).$$
\end{enumerate}
\end{lem}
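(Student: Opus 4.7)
The plan is to recognize that both parts are direct reformulations of Propositions~\ref{as-sequences} and~\ref{waus} through the definitions of the two-variable quantities $N_2^\delta$ and $N_2^{\overline{\rho}_X}$. Part (i) additionally exploits the inequality $\delta \le \overline{\delta}_X^*$ already recorded before Lemma~\ref{Nnorm-lp}, while part (ii) requires a short rescaling argument to match the parameter $t$ in Proposition~\ref{waus}.

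For part (i), the case $x^* = 0$ is immediate since $N_2^\delta(0,s) = s$. Otherwise, with $\alpha = \|x^*\|$ and $\beta = \limsup_n \|x_n^*\|$, Proposition~\ref{as-sequences} gives
$$\limsup_n \|x^* + x_n^*\| \ge \alpha\bigl(1 + \overline{\delta}_X^*(\beta/\alpha)\bigr) \ge \alpha + \alpha\,\delta(\beta/\alpha) = N_2^\delta(\alpha,\beta),$$
the second inequality being exactly $\delta \le \overline{\delta}_X^*$.

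For part (ii), set $\alpha = \|x\|$ and $\beta = \liminf_n \|x_n^{**}\|$. If $\alpha = 0$ both sides equal $\beta$; if $\beta = 0$, extract a subsequence with $\|x_n^{**}\| \to 0$ and observe that $\|x + x_n^{**}\| \to \alpha = N_2^{\overline{\rho}_X}(\alpha,0)$. Otherwise, pass to a subsequence along which $\|x_n^{**}\| \to \beta$ (which can only decrease $\liminf_n \|x + x_n^{**}\|$), fix $\eta > 0$, and note that $x_n^{**}/(\beta+\eta)$ is weak$^*$-null and eventually in $B_{X^{**}}$. Factoring $\alpha$ out of $\|x + x_n^{**}\|$ and applying Proposition~\ref{waus} to $y = x/\alpha \in S_X$ with $t = (\beta+\eta)/\alpha$ yields
$$\limsup_n \|x + x_n^{**}\| \le \alpha\bigl(1 + \overline{\rho}_X((\beta+\eta)/\alpha)\bigr);$$
letting $\eta \to 0^+$ and using continuity of $\overline{\rho}_X$ gives the claimed upper bound on $\liminf_n \|x + x_n^{**}\|$.

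The main (and only) technical subtlety is that Proposition~\ref{waus} is stated for $t \in (0,1)$, whereas the rescaled parameter $t = (\beta+\eta)/\alpha$ may be $\ge 1$ when $\beta \ge \alpha$. This is not a genuine obstruction: the proof of Proposition~\ref{waus} rests on the definition of $\overline{\rho}_X$ together with a Goldstine-type approximation of the weak$^*$-null sequence by elements of finite-codimensional subspaces of $X$, and both ingredients remain valid for any $t > 0$.
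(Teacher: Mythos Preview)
Your proof is correct and matches the paper's approach exactly for part (i); the paper leaves part (ii) to the reader, and your details are sound, including the observation that the restriction $t\in(0,1)$ in Proposition~\ref{waus} is inessential. One minor slip: passing to a subsequence can only \emph{increase} the $\liminf$, not decrease it, but your argument is unaffected since you actually bound the subsequence $\limsup$, which dominates the full-sequence $\liminf$.
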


\begin{proof}
 If $x^*=0$ there is nothing to do, so we may assume that $x^*\neq 0$.
 By application of Proposition~\ref{as-sequences} we see that 
 \[
 \begin{aligned}
  \limsup_{n\to\infty}\norm{x^*+x_n^*}&\geq \norm{x^*}\left(1+\overline{\delta}^*_X\left(\frac{\limsup\norm{x_n^*}}{\norm{x^*}}\right)\right)\\
  &\geq \norm{x^*}\left(1+\delta\left(\frac{\limsup\norm{x_n^*}}{\norm{x^*}}\right)\right)=N_2^\delta(\norm{x^*},\limsup \norm{x_n^*})
  \end{aligned}
 \]
The proof of the second claim is even simpler so we leave it to the reader.
\end{proof}

\section{The general result}

Let us first recall that a Banach space is said to be {\it quasi-reflexive} if the image of its canonical embedding into its bidual is of finite codimension in its bidual. We can now state our main result.

\begin{thm}\label{general} Let $X$ be a quasi-reflexive Banach space, let $p\in (1,\infty)$ and denote $q$ its conjugate exponent. Assume that $X$ admits an equivalent $p$-AUS norm and that $X^*$ admits an equivalent $q$-AUS norm. Then the family $([\N]^k,d_{\K}^k)_{k\in \N}$ does not equi-coarsely embed into $X^{**}$.
\end{thm}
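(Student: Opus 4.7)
The argument proceeds by contradiction. Suppose $([\N]^k, d_{\K}^k)_{k\in \N}$ equi-coarsely embeds into $X^{**}$ via maps $f_k$ with moduli $\rho, \omega$ satisfying $\rho(k) \to \infty$. After rescaling and passing to a separable subspace (the asymptotic hypotheses pass to such subspaces up to renorming), we may assume that each $f_k$ is $1$-Lipschitz and that $X$ is separable; the $p$-AUS norm then forces $X$ to be Asplund, so $X^*$ is separable as well. Applying Proposition~\ref{nulltree} with $X^*$ in place of $X$ to each $f_k$ produces $\M_k \in [\N]^\omega$ and a weak$^*$-null tree $(x^{**}_k(\m))_{\m \in [\M_k]^{\le k}} \subset X^{**}$ with $\|x^{**}_k(\m)\| \le 1$ such that $f_k(\n) = \sum_{\m \preceq \n} x^{**}_k(\m)$ for every $\n \in [\M_k]^k$.

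Exploit quasi-reflexivity: write $X^{**} = X \oplus F$ with $\dim F < \infty$, let $P$ denote the associated bounded projection onto $X$, and set $x_k(\m) = P x^{**}_k(\m) \in X$, $y_k(\m) = (I - P) x^{**}_k(\m) \in F$. Since each $y_k(\m)$ lies in a bounded subset of the finite-dimensional space $F$, an iterated application of Proposition~\ref{ramsey} level by level (with precisions $\ep_k \to 0$) produces a subset $\M'_k \subset \M_k$ on which $y_k(\m)$ is, to arbitrary precision, a constant $y^*_{k,i} \in F$ depending only on $i = |\m|$; simultaneously, one stabilizes the magnitudes $\|x^{**}_k(\m)\|$ at each level to constants $c_{k,i} \in [0,1]$. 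After this reduction, the telescoping formula shows that for every pair $\n, \m \in [\M'_k]^k$ the difference $f_k(\n) - f_k(\m)$ lies in $X$ up to negligible error, and equals $\sum_{i=1}^k \bigl(x_k(n_1,\ldots,n_i) - x_k(m_1,\ldots,m_i)\bigr)$.

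Now combine two asymptotic estimates. By Proposition~\ref{duality}, the $q$-AUS assumption on $X^*$ yields the $p$-AUC$^*$ property of $X^{**}$; iterating Lemma~\ref{l:asymptotic-Nnorm}(i) from the outermost to the innermost index along suitably chosen branches and invoking Lemma~\ref{Nnorm-lp}(ii) gives a lower bound of the form $\|f_k(\n) - f_k(\m)\| \gtrsim \bigl(\sum_i c_{k,i}^p\bigr)^{1/p}$. Dually, the $p$-AUS assumption on $X$ combined with Proposition~\ref{waus}, Lemma~\ref{l:asymptotic-Nnorm}(ii), and Lemma~\ref{Nnorm-lp}(i)---iterated while using the reduction above to keep the relevant partial sums in $X$---yields a matching upper bound of the form $\|f_k(\n) - f_k(\m)\| \lesssim \bigl(\sum_i c_{k,i}^p\bigr)^{1/p}$. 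A final Ramsey-based refinement forcing the $c_{k,i}$'s to be small, together with a judicious choice of two branches $\n, \m \in [\M'_k]^k$ with $d_{\K}^k(\n,\m) = k$ (for instance $m_k < n_1$), then turns the coarse-embedding inequality $\rho(k) \le \|f_k(\n) - f_k(\m)\|$ into a bound on $\rho(k)$ uniform in $k$, contradicting $\rho(k) \to \infty$.

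The principal technical difficulty lies in reconciling the weak$^*$-null tree structure in $X^{**}$ with the upper-bound estimate coming from $p$-AUS of $X$ (Proposition~\ref{waus}), which applies only when one of the two vectors lies in $X$ rather than in $X^{**}$. The finite-dimensional drift carried by the $F$-component of the tree must be absorbed via Ramsey stabilization before the iteration of the upper bound can be synchronized with the corresponding iteration of the lower bound from $p$-AUC$^*$ in $X^{**}$; it is this synchronization, producing matched $\ell_p$-type estimates from both sides, from which the contradiction is extracted.
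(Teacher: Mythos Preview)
Your setup is largely correct and tracks the paper: the reduction to separable $X$, the weak$^*$-null tree decomposition of $f_k$ via Proposition~\ref{nulltree}, the quasi-reflexive splitting $X^{**}=X\oplus F$, and the Ramsey stabilization of the $F$-components and of the level norms $c_{k,i}$ are all exactly what is needed. The lower $\ell_p$-estimate coming from the $q$-AUS norm on $X^*$ (equivalently the $p$-AUC$^*$ of $X^{**}$), applied to \emph{adjacent} pairs and combined with the Lipschitz bound, does yield the crucial uniform bound $\sum_i c_{k,i}^p\le C$.

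The genuine gap is in your upper estimate. For a pair $\n,\m$ at distance $k$ one necessarily has (say) $m_k<n_1$, so the branches are completely separated. To iterate Lemma~\ref{l:asymptotic-Nnorm}(ii) (which rests on Proposition~\ref{waus}) you must, at each stage, add a weak$^*$-null term to an anchor lying in $X$. With $m_k<n_1$ there is no way to arrange this: if you add the $\n$-terms after fixing $\m$, the running partial sum contains the fixed block $-\sum_{i\ge l} z(m_1,\dots,m_i)$, whose $F$-component is $-\sum_{i\ge l} y^*_{k,i}\ne 0$; if instead you try to pair levels, $z(n_1,\dots,n_i)-z(m_1,\dots,m_i)$ is not weak$^*$-null in any single index because $m_i$ and $n_i$ cannot be varied simultaneously. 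Hence the claimed bound $\|f_k(\n)-f_k(\m)\|\lesssim(\sum_i c_{k,i}^p)^{1/p}$ for distance-$k$ pairs is unsupported. (Your sentence about a ``Ramsey-based refinement forcing the $c_{k,i}$'s to be small'' is also not a valid step: Ramsey stabilizes these values but cannot make them small.)

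The paper's proof repairs precisely this point. Rather than seeking distance $k$, one fixes $N$, takes $k=N^{1+\alpha}$ with $\alpha\ge p/q$, and uses $\sum_i K_i^p\le C$ together with a pigeonhole argument to locate a block $\{j+1,\dots,j+N\}$ on which $\sum K_i\le A\omega(1)$ (H\"older). One then builds $\m$ and $\overline{u}$ that \emph{agree} on $\{1,\dots,j\}$ and on $\{j+N+1,\dots,k\}$ and differ only on that block, so that $d_\K^k(\m,\overline{u})=N$. The contribution of the block is controlled by the triangle inequality via the $\ell_1$ bound just obtained, while on the common tail one can legitimately iterate Lemma~\ref{l:asymptotic-Nnorm}(ii): at each step the new term $z(m_1,\dots,m_i)-z(u_1,\dots,u_i)$ is weak$^*$-null as the common last coordinate $m_i=u_i$ varies, and the anchor, being a sum of same-level differences, lies in $X$ up to the Ramsey error. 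This pigeonhole-plus-shared-tail construction is the missing idea in your argument.
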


We immediately deduce the following.

\begin{cor}\label{generalcorollary} 
Let $X$ be a quasi-reflexive Banach space, let $p\in (1,\infty)$ and denote $q$ its conjugate exponent. 
Assume that $X$ admits an equivalent $p$-AUS norm and that $X^*$ admits an equivalent $q$-AUS norm. 
Then the family $([\N]^k,d_{\K}^k)_{k\in \N}$ does not equi-coarsely embed into $X$, nor does it equi-coarsely embed into any iterated dual $X^{(r)}$ ($r\geq 0$) of $X$.
\end{cor}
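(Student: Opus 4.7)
The plan is to reduce the corollary to Theorem~\ref{general} applied to successive iterated duals of~$X$. The first ingredient is the observation that every iterated dual $X^{(r)}$ is itself quasi-reflexive: if $Y$ is quasi-reflexive of order~$n$, then the annihilator $Y^\perp \subset Y^{***}$ is canonically isomorphic to $(Y^{**}/Y)^*$, hence of dimension~$n$, while $Y^{***}/Y^\perp \cong Y^*$, so $Y^*$ is also quasi-reflexive of order~$n$. Iterating, all $X^{(r)}$ are quasi-reflexive, and consequently the canonical embedding $X^{(r)} \hookrightarrow (X^{(r)})^{**} = X^{(r+2)}$ is an \emph{isometric} inclusion of finite codimension. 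In particular, any equi-coarse embedding of the family $([\N]^k,d_{\K}^k)_{k\in\N}$ into $X^{(r)}$ yields, by composition with the inclusion, an equi-coarse embedding into $X^{(r+2)}$.

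The second ingredient is the propagation of the AUS hypotheses to all iterated duals. I would show by induction on $r$ that $X^{(r)}$ admits an equivalent $p$-AUS norm when $r$ is even and an equivalent $q$-AUS norm when $r$ is odd, the cases $r\in\{0,1\}$ being the hypotheses of the corollary. For the inductive step, quasi-reflexivity yields a topological decomposition $X^{(r+2)} = X^{(r)} \oplus F$ with $F$ finite-dimensional, and I would equip $X^{(r+2)}$ with the $\ell_\infty$-sum of an inductively given equivalent $p$-AUS norm on $X^{(r)}$ and an arbitrary norm on $F$. A direct computation of $\overline{\rho}$ for this sum norm, split into the subcases $\|x_0\|=1$ and $\|f_0\|=1$ on the unit sphere (in the second subcase, choosing a subspace of the form $W_0 \oplus \{0\}$ makes the contribution of $F$ disappear), shows that $\overline{\rho}(t) \le C t^p$ for small $t$, so the resulting norm on $X^{(r+2)}$ is $p$-AUS. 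The argument is identical with $q$ in place of $p$.

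With these two ingredients in hand, the conclusion is almost automatic: fix $r\geq 0$, set $Y:=X^{(r)}$, and apply Theorem~\ref{general} to~$Y$, using the pair $(p,q)$ if $r$ is even and the swapped pair $(q,p)$ if $r$ is odd. Both $Y$ and $Y^* = X^{(r+1)}$ then satisfy the required AUS hypotheses (by the second ingredient) and $Y$ is quasi-reflexive (by the first), so the theorem yields that $([\N]^k,d_{\K}^k)_{k\in\N}$ does not equi-coarsely embed into $Y^{**} = X^{(r+2)}$; invoking the first ingredient once more forbids equi-coarse embeddings into $X^{(r)}$ itself. I expect the only mildly delicate step to be the propagation of the $p$-AUS property to finite-codimensional extensions, although it is standard and follows from a direct estimate on an equivalent $\ell_\infty$-sum norm.
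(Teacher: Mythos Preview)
Your proof is correct and follows essentially the same route as the paper's: both propagate quasi-reflexivity and the appropriate AUS renormings to all iterated duals via the finite-codimensional splitting $X^{(r+2)}\simeq X^{(r)}\oplus F$, then apply Theorem~\ref{general} to $Y=X^{(r)}$ (swapping $p$ and $q$ for odd $r$) and use the canonical inclusion $X^{(r)}\hookrightarrow X^{(r+2)}$ to pull the conclusion back. The only cosmetic difference is that the paper equips $X^{(r)}\oplus F$ with the $\ell_p$- (or $\ell_q$-) sum norm rather than the $\ell_\infty$-sum you chose; either choice yields the required $p$-AUS (resp.\ $q$-AUS) renorming.
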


\begin{proof} 
Since $X$ is quasi reflexive we infer that $X^{(r)}$ admits an equivalent $p$-AUS norm when $r$ is even and it admits an equivalent $q$-AUS norm when $r$ is odd. 
Indeed, note that when $r$ is even $X^{(r)}$ is isomorphic to $X\oplus_p F$ where $F$ is finite-dimensional (resp. $X^{(r)}\simeq X^*\oplus_q F$ when $r$ is odd).
Now it is obvious from Theorem~\ref{general} that $([\N]^k)_{k\in \N}$ do not equi-coarsely embed into $X^{(r)}$ when $r$ is even. 
When $r$ is odd, we just exchange the roles of $p$ and $q$.
\end{proof}

Before going into the detailed proof of Theorem~\ref{general} let us briefly indicate the main idea. We assume that there is an equi-coarse family of embeddings $(f_k)$ of $[\N]^k$ into $X^{**}$ with moduli $\rho$ and $\omega$.
We fix $k$ sufficiently large and observe that, up to passing to a subgraph, $f_k$ can be represented as the sum along the branches of a weak$^*$-null countably branching tree of height $k$, say $(z_{\n})_{\n\in [N]^{\leq k}}$.
Moreover the norms of the elements of this tree stabilize on each level towards values $(K_i)_{i=1}^k \subset [0,\omega(1)]$.
Applying the existence of a $q-AUS$ norm on $X^*$ one can show that $\sum_{i=1}^k K_i^p \leq c^p\omega(1)^p$ where $c$ is a constant depending only on $X$.
The benefit of this observation is twofold.
On one hand we will be able to construct two elements $\n_0,\m_0 \in [\N]^{l}$ (with $l\leq k$) such that $\sum_{i=1}^l z_{(n_1,\ldots,n_i)}-z_{(m_1,\ldots,m_i)}$ is small in norm (say less than $2c\omega(1)$) while $d_{\K}^l(\n_0,\m_0)$ is large (say $\rho(d_{\K}^l(n_0,\m_0))> 3c\omega(1)$). 
On the other hand the $p-AUS$ renormability of $X$ together with the quasi-reflexivity allows to extend these elements to elements $\n,\m \in [\N]^k$ such that $\dk(\n,\m)$ is still large and 
\[\begin{aligned}
\norm{\sum_{i=l+1}^k z_{(n_1,\ldots,n_i)}-z_{(m_1,\ldots,m_i)}} &\sim \left(\sum_{i=l+1}^k\norm{z_{(n_1,\ldots,n_i)}-z_{(m_1,\ldots,m_i)}}^p\right)^{1/p}\\ &\sim (\sum_{i=l+1}^k K_i^p)^{1/p}\leq c\omega(1)   .
\end{aligned}
\]
Eventually, summing the tree from $1$ to $k$ over the branches ending by $\n$ and $\m$ we get the desired contradiction
\[
 3c\omega(1)<\rho(\dk(\n,\m))\leq \norm{f_k(\n)-f_k(\m)}\leq 3c\omega(1).
\]

\begin{proof}[Proof of Theorem \ref{general}] Let us assume that there are two maps $\rho, \, \omega \colon [0,+\infty) \to [0,+\infty)$ and maps $f_k ([\N]^k,d_{\K}^k) \colon \to (X^{**},\|\ \|)$ for $k \in \N$ such that:
\begin{enumerate}[(i)]
\item $\lim_{t \to \infty} \rho(t) = \infty$,
\item $\omega(t) < \infty$ for every $t \in (0,+\infty)$,
\item $\rho(t) \leq \rho_{f_k}(t)$ and $\omega_{f_k}(t) \leq \omega(t)$ for every $k \in \N$ and $t \in (0,\infty)$.
\end{enumerate}
Note that all $f_k$'s are $\omega(1)$-Lipschitz for $\|\ \|$ and so $\omega(1)>0$. Since all the sets $[\N]^k$ are countable, we may and will assume that $X$ and therefore, by the quasi-reflexivity of $X$, that all its iterated duals are separable.\\
Let us fix $N\in \N$. Pick $\alpha\in \N$ such that $\alpha\ge \frac{p}{q}$ and set $k=N^{1+\alpha} \in \N$. We also fix $\eta>0$. We shall provide at the end of our proof a contradiction if $N$ is chosen large enough and $\eta$ small enough. We denote $\|\ \|$ the original norm on $X$, as well as its dual and bidual norms. Let us assume, as we may, that $\|\ \|$ is $p$-AUS on $X$. We denote its modulus of asymptotic uniform smoothness $\overline{\rho}_{\|\ \|}$ or simply $\overline{\rho}_{X}$.

\medskip
For the first step of the proof we shall exploit the existence of an equivalent $q$-AUS norm $|\ |$ on $X^*$ (we also denote $|\ |$ its dual norm on $X^{**}$). 
It is worth mentioning that if $X$ is not reflexive, $|\ |$ cannot be the dual norm of an equivalent norm on $X$ (see for instance Proposition 2.6 in \cite{CauseyLancien}). 
Assume also that there exists $b>0$ such that
\begin{equation}\label{equivalence}
\forall z \in X^{**}\ \ b\|z\|\le |z|\le \|z\|.
\end{equation}
Then we have that all $f_k$'s are also $\omega(1)$-Lipschitz for $|\ |$.\\
By Proposition \ref{duality}, we have that there exists $c>0$ such that for all $t\in [0,1]$, $\overline{\delta}_{|\ |}^*(t)\ge ct^p$. 
We denote again
$$\delta(t)=\int_0^t \frac{\overline{\delta}_{|\ |}^*(s)}{s}\, ds.$$
Recall that Lemma \ref{Nnorm-lp} ensures the existence of $a>0$ such that for all $n\in \N$, $N_n^\delta \ge 2a\|\ \|_{\ell_p^n}$.

First, using the separability of $X^*$ and Proposition \ref{nulltree}, we may assume by passing to a full subtree, that there exist a weak$^*$-null tree $(z(\m))_{\m\in[\N]^{\le k}}$ in $X^{**}$ with $|z_{\m}| \leq \omega(1)$ for all $\m\in [\N]^{\leq k}\setminus\{\emptyset\}$ and so that
$$\forall \n \in [\N]^k,\ f_k(\n)=\sum_{i=0}^k z(n_1,\ldots,n_i)=\sum_{\m \preceq \n} z(\m).$$

For $r\in \N$ we denote $E_r=\{\m=(m_1,\ldots,m_j)\in [\N]^{\le k}\setminus \{\emptyset\},\ m_j=r\}$ and $F_r=\bigcup_{u=1}^r E_u$. 
Fix a sequence $(\lambda_r)_{r=1}^\infty$ in $(0,1)$ such that $\prod_{r=1}^\infty \lambda_r > \frac12$. 
We now use Lemma~\ref{l:asymptotic-Nnorm}~(i) and the fact that $(z(\m))_{\m\in[\N]^{\le k}}$ is a weak$^*$-null tree to build inductively $n_1<\ldots<n_r$ so that for all $\n^1,\ldots,\n^L \in F_{n_{r}-1}$, for all $\eps_1,\ldots,\eps_L \in \{-1,1\}$ and all $\n \in E_{n_r}$, we have
$$\big|z(\n)+\sum_{l=1}^L\eps_l z(\n^l)\big|\ge \lambda_r N_2^{\delta}\Big(\big|\sum_{l=1}^L\eps_l z(\n^l)\big|, \big|z(\n)\big|\Big).$$
Therefore, using the fact that $N_2^\delta$ is an absolute norm and after passing to a full subtree, we may assume that for all $r_1<\cdots<r_L$ in $\N$, all $\eps_1,\ldots,\eps_L \in \{-1,1\}$ and  all $\n^1,\ldots,\n^L$ so that $\n^l \in E_{r_l}$ for $1\le l\le L$, we have
\begin{equation}\label{treelower-lp}
\big|\sum_{l=1}^L\eps_l z(\n^l)\big| \ge \frac12 N_L^\delta\big(|z(\n^1)|,\ldots,|z(\n^L)|\big)  \ge a\Big(\sum_{i=1}^L \big|z(\n^l)\big|^p\Big)^{1/p}.
\end{equation}
Assume now that $\n=(n_1,\ldots,n_k)\in \N^k$ is such that $n_1<\cdots<n_k$ are even and choose $\m=(m_1,\ldots,m_k)$ so that $n_1<m_1<\cdots<n_k<m_k$. It follows from (\ref{treelower-lp}) that
\begin{eqnarray*}
\big|f(\n)-f(\m)|=&&\Big|\sum_{i=1}^k z(n_1,\ldots,n_i)-z(m_1,\ldots,m_i)\Big| \\
&&\ge a\Big(\sum_{i=1}^k \big|z(n_1,\ldots,n_i)\big|^p+\big|z(m_1,\ldots,m_i)\big|^p\Big)^{1/p}.
\end{eqnarray*}
We now use the fact that $d_\K^k(\n,\m)=1$ and $f$ is $\omega(1)$-Lipschitz, to deduce
$$\Big(\sum_{i=1}^k \big|z(n_1,\ldots,n_i)\big|^p\Big)^{1/p}\le \frac{1}{a}\omega(1).$$
So replacing $\N$ with $2\N$ and setting $A=1/a$, we may assume that
\begin{equation}\label{endAUC}
\forall \n\in [\N]^k,\ \ \Big(\sum_{i=1}^k \big|z(n_1,\ldots,n_i)\big|^p\Big)^{1/p}\le A\omega(1).
\end{equation}

By Ramsey's theorem (Proposition \ref{ramsey}), we may also assume, after passing again to a full subtree, that for all $i\in\{1,\ldots,k\}$ there exists $K_i\in [0,\omega(1)]$ such that
$$\forall (n_1,\ldots,n_i) \in [\N]^i,\ K_i\le |z(n_1,\dots,n_i)|\le K_i+\eta.$$
The estimate \eqref{endAUC} yields
\begin{equation}\label{p-concentration}
\sum_{i=1}^k K_i^p \le A^p\omega(1)^p.
\end{equation}
Therefore, since $k=N^{1+\alpha}$, there exists $j\in \{0,N,\ldots,N(N^{\alpha}-1)\}$ such that
$$\sum_{i=j+1}^{j+N} K_i^p \le \frac{A^p\omega(1)^p}{N^\alpha}.$$
Then we deduce from H\"{o}lder's inequality that
\begin{equation}\label{Holder}
\sum_{i=j+1}^{j+N} K_i \le N^{1/q}\frac{A\omega(1)}{N^{\alpha/p}} \le A\omega(1).
\end{equation}

We now use the assumption that $X$ is quasi-reflexive, so that $X^{**}=X \oplus F$, where $F$ is of finite dimension. Thus, for each $(n_1,\dots,n_i) \in [\N]^{\le k}$, we can decompose $z(n_1,\dots,n_i)=x(n_1,\dots,n_i)+e(n_1,\dots,n_i)$, with $x(n_1,\dots,n_i)\in X$ and $e(n_1,\dots,n_i)\in F$. Then, the compactness of bounded sets in $F$ and another application of Proposition \ref{ramsey} allows us to assume, after passing to a full subtree, that
$$\forall i\in \{1,\ldots,k\}\  \forall \n,\overline{v} \in [\N]^{i},\ \ \|e(\n)-e(\overline{v})\| < \eta,$$
Which implies that for all $i\in \{1,\ldots,k\}$ and all $\n,\overline{v} \in [\N]^{i}$ we have
\begin{equation}\label{stabilize}
\big| \|z(\n)-z(\overline{v})\|-\|x(\n)-x(\overline{v})\|\big| < \eta.
\end{equation}

We are now ready for the last step of the proof, where we shall build $\m$ and $\overline{u}$ in $[\N]^k$ so that $d_\K^k(\m,\overline{u})=N$, but $|f(\m)-f(\overline{u})|$ is bounded by a constant depending only on $\omega(1)$ and on $X$. This will yield a contradiction with the fact $\lim_{N\to \infty}\rho(N)=\infty$.

First, we set $m_i=u_i=i$, for all $1\le i\le j$. 
Then, for $j+1\le i\le j+N$, we set $m_i=i$ and $u_i=i+N$. 
Finally, we shall build $m_i=u_i$ inductively, for $j+N<i\le k$. 
Note, that when this will be done, we will indeed have $d_\K^k(\m,\overline{u})=N$.

First, we obviously have
\begin{equation}\label{samedebut}
\sum_{i=1}^{j} z(m_1,\ldots,m_i)-z(u_1,\ldots,u_i)=0.
\end{equation}
The next estimate follows from (\ref{Holder}).
\begin{equation}\label{smallblock}
\Big|\sum_{i=j+1}^{j+N} z(m_1,\ldots,m_i)-z(u_1,\ldots,u_i)\Big| \le \sum_{i=j+1}^{j+N} 2(K_i+\eta) \le 3A\omega(1),
\end{equation}
if $\eta$ was initially chosen small enough.

We now select the remaining coordinates of  $\m$ and $\overline{u}$ inductively using the fact that $\|\ \|$ is $p$-AUS. 
To shorten the notation for the end of the proof, we shall now denote $x_i=x(m_1,\ldots,m_i)$, $z_i=z(m_1,\ldots,m_i)$, $x'_i=x(u_1,\ldots,u_i)$ and $z'_i=z(u_1,\ldots,u_i)$. 
First, we simply set $m_{j+N+1}=u_{j+N+1}=j+2N+1$. 
We now use the fact that the tree $(z(\m))_{\m \in [\N]^{\le k}}$ is weak$^*$-null and Lemma~\ref{l:asymptotic-Nnorm}~(ii) to find $m_{j+N+2}=u_{j+N+2}>j+2N+1$ such that
\begin{multline*}
\|x_{j+N+1}-x'_{j+N+1}+z_{j+N+2}-z'_{j+N+2}\| \\
\le N_2^{\overline{\rho}_{X}}\big(\|x_{j+N+1}-x'_{j+N+1}\|,
\|z_{j+N+2}-z'_{j+N+2}\|\big)+\eta
\end{multline*}
It follows from (\ref{stabilize}) that
\begin{multline*}
\|z_{j+N+1}-z'_{j+N+1}+z_{j+N+2}-z'_{j+N+2}\| \\
\begin{aligned}
&\le N_2^{\overline{\rho}_{X}}\big(\|z_{j+N+1}-z'_{j+N+1}\|+\eta,
\|z_{j+N+2}-z'_{j+N+2}\|\big)+2\eta\\
&\le N_2^{\overline{\rho}_{X}}\Big(\frac{2}{b}\big(K_{j+N+1}+\eta\big)+\eta,\frac{2}{b}\big(K_{j+N+2}+\eta\big)\Big)+2\eta.
\end{aligned}
\end{multline*}
Similarly, we can inductively find $m_{j+N+2}=u_{j+N+2}<\cdots<m_{k}=u_{k}$ such that,
$$
\Big\|\sum_{i=j+N+1}^k (z_i-z'_i)\Big\| \le \frac{2}{b}N_{k-j-N}^{\overline{\rho}_{X}}\big(K_{j+N+1},\ldots,K_k)+\omega(1)
$$
provided $\eta$ is chosen small enough.
Since Lemma \ref{Nnorm-lp} ensures the existence of $C>0$ such that $N_n^{\overline{\rho}_{X}}\le C\|\ \|_{\ell_p^n}$ for all $n\in \N$ the above inequality yields 
$$\Big\|\sum_{i=j+N+1}^k (z_i-z'_i)\Big\| \le \frac{2C}{b}\Big(\sum_{i=j+N+1}^k K_i^p\Big)^{1/p} +\omega(1)\le \Big(\frac{2CA}{b}+1\Big)\omega(1).$$
Finally, combining the above estimate with (\ref{samedebut}) and (\ref{smallblock}), we get that
$$\|f(\m)-f(\overline{u})\|\le \frac{3A+2CA+b}{b}\omega(1).$$
As announced at the beginning of the proof, this yields a contradiction if $N$ was initially chosen, as it was possible, so that $\rho(N)>\frac{3A+2CA+b}{b}\omega(1).$
\end{proof}

Unlike reflexivity, quasi-reflexivity itself is not enough to prevent the Kalton graphs from embedding into a Banach space. We thank P. Motakis for showing us the next example.

\begin{prop}[Motakis]\label{Pavlos} There exists a quasi-reflexive Banach space $X$ such that the family of graphs $([\N]^k,d_\K^k)_{k\in \N}$ equi-Lipschitz embeds into $X$.
\end{prop}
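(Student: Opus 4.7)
The plan is to construct a quasi-reflexive space $X$ via a generalized James-type construction whose underlying ``block-norm'' is asymptotically $c_0$-like, so that the standard $c_0$-summing-basis embedding of the Kalton graphs transfers to $X$. By Corollary~\ref{generalcorollary}, such a space must fail the hypothesis that both $X$ and $X^*$ admit $p$-AUS norms; the way to achieve this is to build $X$ over a reflexive Banach space $Y$ whose asymptotic structure is $c_0$-like on admissible blocks. Note that the standard James space $\J$ (and more generally $J(Y)$ built with an $\ell_p$-type block norm) cannot work, since Theorem~\ref{general} precludes even equi-coarse embeddings.

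Concretely, let $Y$ be a reflexive Banach space with a normalized $1$-unconditional basis $(u_n)$ satisfying the asymptotic-$c_0$ property
\[
\Bigl\|\sum_{j=1}^n c_j u_{p_j}\Bigr\|_Y \leq C\max_j|c_j| \quad \text{whenever } n\leq p_1<p_2<\cdots<p_n,
\]
as holds, for instance, when $Y=T^*$ is the dual of Tsirelson's space. Define $X$ to be the completion of $c_{00}$ under
\[
\|x\|_X = \sup\Bigl\|\sum_{j=1}^n \bigl(S(x)(p_j)-S(x)(p_{j-1})\bigr)u_{p_j}\Bigr\|_Y,
\]
where $S(x)(j)=\sum_{i=1}^j x(i)$, and the supremum is taken over all Schreier-admissible partitions $0=p_0<p_1<\cdots<p_n$ with $n\leq p_1$.

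Quasi-reflexivity of $X$ should follow by adapting the classical James argument: the reflexivity and $c_0$-freeness of $Y$ should imply that the canonical basis $(e_n)$ of $X$ is shrinking, while the summing functional generates the unique one-dimensional complement of $X$ in $X^{**}$. For the Kalton embedding, I would take $f_k(\n)=\sum_{i=1}^k e_{n_i}$; then $S(f_k(\n)-f_k(\m))(j)=F_{\n,\m}(j)$ (the function of Proposition~\ref{p:KaltonDistanceFormula}), so
\[
\|f_k(\n)-f_k(\m)\|_X = \sup\Bigl\|\sum_{j=1}^n\bigl(F_{\n,\m}(p_j)-F_{\n,\m}(p_{j-1})\bigr)u_{p_j}\Bigr\|_Y.
\]
The upper Lipschitz bound $\|f_k(\n)-f_k(\m)\|_X\leq C\cdot d_\K^k(\n,\m)$ would follow from the asymptotic-$c_0$ property of $Y$ combined with the estimate $|F_{\n,\m}(p_j)-F_{\n,\m}(p_{j-1})|\leq d_\K^k(\n,\m)$, which is immediate from the range formula of Proposition~\ref{p:KaltonDistanceFormula}. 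The lower bound would be obtained by taking a two-block admissible partition with $p_1,p_2$ realizing the extrema of $F_{\n,\m}$ and invoking the unconditional basis constant of $Y$.

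The main obstacle is verifying quasi-reflexivity when the norm depends on the partition both through the coefficients and through the basis positions $u_{p_j}$. Since this is not a standard James-type norm, proving that $(e_n)$ is shrinking requires a careful adaptation of the classical argument: one must show that finitely-supported vectors with support starting at large positions can be asymptotically uncoupled from the future of the basis, uniformly over Schreier-admissible partitions, combining the asymptotic-$c_0$ behavior of $Y$ with a delicate analysis of the supremum in the norm's definition. This balancing of asymptotic $c_0$-structure against quasi-reflexivity is the heart of Motakis's construction.
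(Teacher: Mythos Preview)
Your proposal has the right intuition—what is needed is a quasi-reflexive space in which some basic sequence behaves, on far-enough-out blocks, like the summing basis of $c_0$—but it is not a proof: you explicitly leave the quasi-reflexivity of your James-over-$T^*$ construction as ``the main obstacle,'' and indeed this is the entire content of the result. Without that step nothing is established. There are also smaller issues (for instance, your embedding $f_k(\n)=\sum_i e_{n_i}$ is not shifted, so for small $n_1$ the Schreier admissibility interacts badly with the lower-bound argument), but these are secondary to the missing main step.

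The paper's proof avoids all of this by \emph{not constructing anything}. It simply invokes a known result (Proposition~3.2 of \cite{FOSZ}, itself based on the Bellenot--Haydon--Odell construction \cite{BHO}): there exists a quasi-reflexive space $X$ of order one with a normalized basis $(x_n)$ generating a spreading model equivalent to the summing basis of $c_0$. This means there are constants $A,B>0$ with
\[
A\Bigl\|\sum_{i=1}^k\eps_i s_i\Bigr\|_{c_0}\le\Bigl\|\sum_{i=1}^k\eps_i x_{n_i}\Bigr\|_X\le B\Bigl\|\sum_{i=1}^k\eps_i s_i\Bigr\|_{c_0}
\]
for all $k\le n_1<\cdots<n_k$ and $\eps_i\in\{-1,0,1\}$. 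One then sets $g_k(\n)=\sum_{i=1}^k x_{2k+n_i}$ (the shift by $2k$ guarantees the spreading-model estimate applies) and reads off the bi-Lipschitz bounds directly from Proposition~\ref{Kgraphsinc0}. The whole argument is three lines once the cited existence result is in hand.

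So the difference is one of economy: the paper outsources the hard construction to the literature, whereas you attempt to rebuild it—essentially reinventing a BHO-type space—and stop at the hard part. If you want to complete your route, you would need to prove that your norm makes $(e_n)$ shrinking; this is nontrivial and is precisely what the BHO machinery handles.
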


\begin{proof} The proof relies on the existence of a quasi-reflexive Banach space $X$ of order one which admits a spreading model, generated by a basis of $X$ that is equivalent to the summing basis $(s_n)_{n=1}^\infty$ of $c_0$. This is shown in \cite{FOSZ} (Proposition 3.2) and based on a construction given in \cite{BHO}. We refer the reader to \cite{BeauzLap} for the necessary definitions. Consequently, there exists a sequence $(x_n)_{n=1}^\infty$ in $S_X$ and constants $A,B>0$ such that for all $k\le n_1<\cdots<n_k$ and all $\eps_1,\ldots,\eps_k$ in $\{-1,0,1\}$ one has
\begin{equation}\label{spreadingsumming}
A\big\|\sum_{i=1}^k \eps_i s_i\big\|_{c_0} \le \big\|\sum_{i=1}^k \eps_i x_{n_i}\big\|_X \le B\big\|\sum_{i=1}^k \eps_i s_i\big\|_{c_0}.
\end{equation}
For $k\in \N$ and $\n=(n_1,\ldots,n_k) \in [\N]^k$ we define
$$g_k(\n)=\sum_{i=1}^k x_{2k+n_i}.$$
It follows easily from Proposition \ref{Kgraphsinc0}, the inequality (\ref{spreadingsumming})  and the fact that $(s_n)_{n=1}^\infty$ is a spreading sequence that 
$$\frac{A}{2}d_\K^k(\n,\m) \le \|g_k(\n)-g_k(\m)\|_X \le B d_\K^k(\n,\m)$$
for all $\n,\m \in [\N]^k$.

\end{proof}

\begin{remark} Let us mention that, more generally, it is proved in \cite{AMS} that for any conditional normalized spreading sequence $(e_n)_{n=1}^\infty$, there exists
a quasi-reflexive Banach space $X$ of order 1  with a normalized basis $(x_i)_{i=1}^\infty$ which generates $(e_n)_{n=1}^\infty$ as a spreading model.
\end{remark}

\section{The James sequence spaces}
\label{s:james}

Let $p\in (1,\infty)$. We now recall the definition and some basic properties of the James space $\J_p$. We refer the reader to \cite{albiackalton}(Section 3.4) and references therein for more details on the classical case $p=2$. The James space $\J_p$ is the real Banach space of all sequences $x=(x(n))_{n\in \N}$ of real numbers with finite $p$-variation and verifying $\lim_{n \to \infty} x(n) =0$. The space $\J_p$ is endowed with the following norm
$$\|x\|_{\J_p} = \sup  \Big \{  \big (\sum_{i=1}^{k-1} |x(p_{i+1})-x(p_i)|^p \big )^{1/p}     \; \colon \; 1 \leq p_1 < p_2 < \ldots < p_{k} \Big \}. $$
This is the historical example, constructed for $p=2$ by R.C. James in \cite{James1}, of a quasi-reflexive Banach space which is isomorphic to its bidual. In fact $\J_p^{**}$ can be seen as the space of all sequences $x=(x(n))_{n\in \N}$ of real numbers with finite $p$-variation, which is $\J_p \oplus \R \car$, where $\car$ denotes the constant sequence equal to $1$.\\
The standard unit vector basis $(e_n)_{n=1}^{\infty}$ ($e_n(i)=1$ if $i = n$ and $e_n(i)=0$ otherwise) is a monotone shrinking basis for $\J_p$. Hence, the sequence $(e_n^*)_{n=1}^{\infty}$ of the associated coordinate functionals is a basis of its dual $\J_p^*$. Then the weak$^*$ topology $\sigma(\J_p^*,\J_p)$ is easy to describe. A sequence $(x^*_n)_{n=1}^{\infty}$ in $\J_p^*$ converges to $0$ in the $\sigma(\J_p^*,\J_p)$ topology if and only if it is bounded and $\lim_{n \to \infty} x^*_n(i)=0$ for every $i \in \N$.\\
For $x\in \J_p$, we define $\supp x = \{i \in \N \, : \, x(i) \neq 0 \}$. For $x,y \in \J_p$, we denote: $x \prec y$ whenever $\max \supp x < \min \supp y$.\\
Similarly, an element $x^*$ of $\J_p^*$ will be written $x^*=\sum_{n=1}^\infty x^*(n)e_n^*$ and $\supp x^* = \{i \in \N \, : \, x^*(i) \neq 0 \}$ and we shall denote $x^* \prec y^*$ whenever $\max \supp x^* < \min \supp y^*$.

The detailed proof of the following proposition can be found in \cite{Netillard} (Proposition 2.3). This a consequence of the following fact: there exists $C\ge 1$ such that $\|\sum_{i=1}^n x_i\|_{\J_p}^p\le C\sum_{i=1}^n \|x_i\|_{\J_p}^p$, for all $x_1 \prec \ldots \prec x_n$ in $\J_p$.

\begin{prop}\label{Jsmooth} There exists an equivalent norm $|\ |$ on $\J_p$ such that its dual norm  $|\ |_*$ has the following property. For any $x^*,y^* \in J_p^*$ such that $x^* \prec y^*$, we have that
$$|x^*+y^*|_*^q\ge |x^*|_*^q+|y^*|_*^q.$$
In particular, $|\ |_*$ is $q$-AUC$^*$ for the weak$^*$ topology induced by $\J_p$ and therefore $|\ |$ is $p$-AUS on $\J_p$.
\end{prop}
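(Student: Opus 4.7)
The plan is to construct $|\cdot|$ indirectly by defining its intended dual norm on $\J_p^*$ in such a way that the lower $q$-estimate on successive elements holds by design. Explicitly, for $x^* \in \J_p^*$ I would set
\[
|x^*|_* = \sup\Big\{\Big(\sum_{i=1}^n \|P_{E_i} x^*\|_{\J_p^*}^q\Big)^{1/q} : n\in\N,\ E_1 < E_2 < \cdots < E_n \text{ successive intervals of } \N\Big\},
\]
where $P_E x^* = \sum_{n\in E} x^*(n)e_n^*$ is the restriction of $x^*$ to the coordinates in $E$, a bounded operator on $\J_p^*$ since $(e_n)$ is a monotone basis of $\J_p$. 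The guiding idea is that the given upper $p$-estimate $\|\sum x_i\|_{\J_p}^p \leq C\sum \|x_i\|_{\J_p}^p$ for $x_1 \prec \cdots \prec x_n$ dualises, via $\ell_p$--$\ell_q$ duality, into a lower $q$-estimate on successive blocks in $\J_p^*$ up to a multiplicative constant; taking a supremum over partitions absorbs this constant.

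First I would check that $|\cdot|_*$ is an equivalent norm. The lower bound $|x^*|_* \geq \|x^*\|_{\J_p^*}$ is immediate (take $n=1$, $E_1=\N$). For the upper bound, for any fixed partition, $(\sum \|P_{E_i}x^*\|^q)^{1/q}$ is (up to a harmless factor arising from the norm of interval projections) equal to $\sup_{\lambda\in B_{\ell_p^n}} \sum_i \lambda_i \langle x^*, z_i\rangle$, where $\|z_i\|_{\J_p}\leq 1$ and $\supp z_i\subseteq E_i$; since the $z_i$ are successive, the hypothesis gives $\|\sum \lambda_i z_i\|_{\J_p}^p \leq C\sum \lambda_i^p\leq C$, yielding $|x^*|_* \leq C'\|x^*\|_{\J_p^*}$. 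To ensure $|\cdot|_*$ is genuinely a dual norm, I would verify weak$^*$ lower-semicontinuity: for each fixed partition $x^* \mapsto \|P_{E_i} x^*\| = \sup\{\langle x^*, y\rangle : \|y\|_{\J_p}\leq 1,\ \supp y\subseteq E_i\}$ is a supremum of weak$^*$-continuous linear functionals and hence weak$^*$-lsc; this property is preserved by finite $\ell_q$ combinations and by the outer supremum.

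Next, the $q$-estimate on successive blocks is transparent from the construction. Given $x^*\prec y^*$, pick partitions $E_1 < \cdots < E_n$ and $F_1<\cdots<F_m$ that are near-optimal for $x^*$ and $y^*$ respectively. After replacing the $E_i$ by their intersections with $[1,\max\supp x^*]$ and the $F_j$ with $[\min\supp y^*,\infty)$ --- an operation which leaves $P_{E_i}x^*$ and $P_{F_j}y^*$ unchanged --- the concatenation $E_1<\cdots<E_n<F_1<\cdots<F_m$ is a valid partition for $x^*+y^*$, and by disjointness of supports $P_{E_i}(x^*+y^*)=P_{E_i}x^*$ and $P_{F_j}(x^*+y^*)=P_{F_j}y^*$. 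Hence
\[
|x^*+y^*|_*^q \;\geq\; \sum_i \|P_{E_i}x^*\|^q + \sum_j \|P_{F_j}y^*\|^q,
\]
which can be made arbitrarily close to $|x^*|_*^q + |y^*|_*^q$.

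Finally, the $q$-AUC$^*$ property is obtained from the $q$-estimate via a standard basis truncation. For $x^* \in S_{(\J_p^*,|\cdot|_*)}$ and $\varepsilon>0$, pick $N$ with $|x^* - P_{[1,N]} x^*|_* < \varepsilon$ (possible since $(e_n^*)$ is a basis of $\J_p^*$). Then $E=\{y^*\in\J_p^* : y^*(e_i)=0 \text{ for }i\le N\}$ is a finite-codimensional weak$^*$-closed subspace, and every $y^*\in E$ satisfies $P_{[1,N]}x^* \prec y^*$, so the $q$-estimate gives $|P_{[1,N]}x^* + ty^*|_*^q \geq (1-\varepsilon)^q + t^q|y^*|_*^q$, whence $|x^*+ty^*|_* \geq ((1-\varepsilon)^q + t^q)^{1/q} - \varepsilon$ for $y^* \in S_E$. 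Letting $\varepsilon \to 0$ yields $\overline{\delta}^*(t) \geq (1+t^q)^{1/q} - 1 \geq ct^q$ for small $t$, so $|\cdot|_*$ is $q$-AUC$^*$; Proposition~\ref{duality} then delivers the $p$-AUS of $|\cdot|$. The only mild technical point is the weak$^*$ lower-semicontinuity check; the rest is a direct adaptation of the classical block-renorming technique to the interval structure of $\J_p$.
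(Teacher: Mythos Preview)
Your proposal is correct and follows precisely the route the paper indicates: the paper does not give a self-contained proof here but defers to \cite{Netillard}, stating only that the result follows from the upper $p$-estimate $\|\sum_{i=1}^n x_i\|_{\J_p}^p\le C\sum_{i=1}^n \|x_i\|_{\J_p}^p$ for successive $x_1\prec\cdots\prec x_n$ in $\J_p$. Your construction---defining $|\cdot|_*$ on $\J_p^*$ as the supremum over successive interval decompositions of the $\ell_q$-sum of the block norms---is exactly the standard Prus-type renorming that turns such an estimate into an exact lower $q$-estimate on the dual, and your verification of equivalence, weak$^*$ lower-semicontinuity, the block inequality, and the passage to $q$-AUC$^*$ are all sound and constitute a faithful expansion of what the paper leaves implicit.
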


There is also a natural weak$^*$ topology on $\J_p$. Indeed, the summing basis $(s_n)_{n=1}^{\infty}$ ($s_n(i)=1$ if $i \leq n$ and $s_n(i)=0$ otherwise) is a monotone and boundedly complete basis for $\J_p$. Thus, $\J_p$ is naturally isometric to a dual Banach space: $\J_p = X^*$ with $X$ being the closed linear span of the biorthogonal functionals $(e_n^* - e_{n+1}^*)_{n=1}^{\infty}$ in $\J_p^*$ associated with $(s_n)_{n=1}^{\infty}$. Note that $X=\{x^*\in \J_p^*,\ \sum_{n=1}^\infty x^*(n)=0\}$. Thus, a sequence $(x_n)_{n=1}^{\infty}$ in $\J_p$ converges to $0$ in the $\sigma(\J_p,X)$ topology if and only if it is bounded and $\lim_{n \to \infty} \big(x_n(i) - x_n(j)\big) = 0$ for every $i \neq j \in \N$. The next proposition is easy (see Proposition 2.3 in \cite{LancienPMB} for the case $p=2$).

\begin{prop}\label{Jconvex} The usual norm on $\J_p$ is p-AUC$^*$ for the weak$^*$ topology induced by $X$. In other words, the restriction to $X$ of the usual norm on $\J_p^*$ is $q$-AUS.
\end{prop}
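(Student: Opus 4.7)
By Proposition~\ref{duality} the two statements are equivalent, so I plan to prove the first. For $x \in S_{\J_p}$ and a $\sigma(\J_p,X)$-null sequence $(z_n) \subset \J_p$, the goal is to show $\liminf_n \|x + z_n\|_{\J_p}^p \ge 1 + (\liminf_n \|z_n\|_{\J_p})^p$, which yields the $p$-AUC$^*$ estimate. By the characterization preceding the statement, $\sigma(\J_p,X)$-nullity of $(z_n)$ means exactly that $(z_n)$ is bounded and $z_n(j) - z_n(j+1) \to 0$ for each $j \in \N$. The key preparatory step is that for every fixed $N \in \N$, the truncated $p$-variation $V_{[1,N]}(z_n) := \sup\{\sum_i |z_n(r_{i+1})-z_n(r_i)|^p : r_1<\cdots<r_l \le N\}$ tends to $0$ as $n\to\infty$: it is a maximum over the finitely many partitions of $[1,N]$ of sums of finitely many vanishing consecutive differences. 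Splitting any partition of $z_n$ around the threshold $N$ and using the uniform smallness of $z_n(r) - z_n(N)$ for $r \le N$, one then sees that asymptotically all of $\|z_n\|^p$ can be realized by partitions of the form $N < q_1 < \cdots < q_l$ anchored at the value $z_n(N)$.

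Given $\varepsilon > 0$, I would pick a partition $p_1 < \cdots < p_k = N$ with $\sum_i |x(p_{i+1}) - x(p_i)|^p > 1 - \varepsilon$, the integer $N$ chosen large enough that $|x(N)| < \varepsilon$ and the $p$-variation of $x$ on $(N,\infty)$ is at most $\varepsilon^p$ (all possible since $x \in \J_p$); and, for each large $n$, a partition $N < q_1^n < \cdots < q_{m_n}^n$ realizing $|z_n(q_1^n)-z_n(N)|^p + \sum_j |z_n(q_{j+1}^n)-z_n(q_j^n)|^p \ge \|z_n\|^p - 2\varepsilon$. The partition sum for $x + z_n$ along the concatenated partition $p_1 < \cdots < p_k < q_1^n < \cdots < q_{m_n}^n$ then writes as $\|a_1+b_1\|_{\ell_p}^p + \|a_2+b_2\|_{\ell_p}^p$, where $a$'s (resp.\ $b$'s) record the $x$- (resp.\ $z_n$-)increments, split between the ``$p$-block'' (first $k-1$ intervals) and the ``crossing-plus-$q$-block'' (last $m_n$ intervals). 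The controls $\|a_1\|_{\ell_p}^p > 1-\varepsilon$, $\|b_1\|_{\ell_p} \to 0$, $\|a_2\|_{\ell_p} = O(\varepsilon)$ (from $|x(N)|, |x(q^n_1)| < \varepsilon$ and the tail bound on $x$), and $\|b_2\|_{\ell_p}^p \ge \|z_n\|^p - 2\varepsilon$, together with the reverse triangle inequality in $\ell_p$ applied to each block, deliver the claim after passing to the limit $n \to \infty$ and then $\varepsilon \to 0$.

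The main obstacle is the absence of a naive lower block inequality $\|u + v\|_{\J_p}^p \ge \|u\|_{\J_p}^p + \|v\|_{\J_p}^p$ for $u \prec v$; this inequality actually \emph{fails} in $\J_p$ (for instance, $s_N \prec \mathbf{1}_{(N,n]}$ sum to $s_n$ of norm $1$, whereas $\|s_N\|_{\J_p}^p + \|\mathbf{1}_{(N,n]}\|_{\J_p}^p = 3$), and so one cannot reduce $\|x+z_n\|$ to a clean block decomposition at the level of norms. The workaround is to argue at the level of partition sums: the weak$^*$-nullity of $(z_n)$ allows one to ``anchor'' a near-optimal partition for $z_n$ at the right endpoint $N$ of the partition chosen for $x$, and the resulting concatenated partition yields a lower bound for $\|x+z_n\|^p$ which, split into two $\ell_p$-blocks, gives the required estimate.
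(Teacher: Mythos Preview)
Your argument is correct. The paper itself does not supply a proof of this proposition; it simply declares the result ``easy'' and refers the reader to Proposition~2.3 of \cite{LancienPMB} for the case $p=2$. Your approach is precisely the natural direct argument one expects in that reference: exploit that $\sigma(\J_p,X)$-nullity forces the finite-range $p$-variation $V_{[1,N]}(z_n)$ to vanish, choose a near-optimal partition for $x$ supported on $[1,N]$ and a near-optimal partition for $z_n$ anchored at $N$ and living on $[N,\infty)$, concatenate, and use the reverse triangle inequality in $\ell_p$ on the two blocks. Your remark that the naive block lower inequality $\|u+v\|_{\J_p}^p \ge \|u\|_{\J_p}^p + \|v\|_{\J_p}^p$ fails (your $s_N$--$\mathbf{1}_{(N,n]}$ example is correct) is exactly why one must work at the level of partition sums rather than norms, and you have handled that correctly. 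Two small points you may wish to make explicit in a final write-up: (i) the tail $p$-variation of $x\in \J_p$ on $[N,\infty)$ indeed tends to $0$ as $N\to\infty$ (otherwise concatenation of far-apart partitions would make $\|x\|_{\J_p}=\infty$), which is what justifies $\|a_2\|_{\ell_p}=O(\varepsilon)$; and (ii) the passage from the sequential lower estimate $\liminf\|x+z_n\|^p\ge \|x\|^p+(\liminf\|z_n\|)^p$ to the modulus inequality $\overline{\delta}^*(t)\ge ct^p$ is the standard separable-predual argument (choose a weak$^*$-null net of unit vectors in shrinking annihilators of finite-dimensional subspaces of $X$).
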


Then, since $X$ is one codimensional in $\J_p^*$, we have that $\J_p^*$ is isomorphic to $X\oplus \R$ and therefore also admits an equivalent $q$-AUS norm.

\medskip
The above remarks combined with Corollary \ref{generalcorollary} immediately yield the following.

\begin{cor}\label{James} Let $p\in (1,\infty)$. Then, the family $([\N]^k,d_{\K}^k)_{k\in \N}$ does not equi-coarsely embed into $\J_p$, nor does it equi-coarsely embed into $\J_p^*$.
\end{cor}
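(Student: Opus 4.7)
The plan is to reduce the statement to a direct application of Corollary~\ref{generalcorollary}, with the role of $X$ in that corollary played by $\J_p$. To this end, three hypotheses need to be verified: that $\J_p$ is quasi-reflexive, that $\J_p$ admits an equivalent $p$-AUS norm, and that $\J_p^*$ admits an equivalent $q$-AUS norm, where $q$ is the conjugate exponent of $p$. Once these are in place, Corollary~\ref{generalcorollary} gives the non equi-coarse embeddability of $([\N]^k,d_\K^k)_{k \in \N}$ into every iterated dual $\J_p^{(r)}$; the two assertions of Corollary~\ref{James} then correspond to $r=0$ and $r=1$.

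The quasi-reflexivity of $\J_p$ is classical and is recalled at the beginning of Section~\ref{s:james}: one has $\J_p^{**} = \J_p \oplus \R\car$, so $\J_p$ is of codimension one in its bidual. The existence of a $p$-AUS equivalent norm on $\J_p$ is provided directly by Proposition~\ref{Jsmooth}: its dual norm satisfies the $q$-additivity estimate $|x^*+y^*|_*^q \ge |x^*|_*^q + |y^*|_*^q$ on successively supported functionals, which yields $q$-AUC$^*$ (and hence $p$-AUS on the predual side, via Proposition~\ref{duality}).

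For the third hypothesis, Proposition~\ref{Jconvex} states that the restriction to the codimension-one weak$^*$-closed subspace $X=\{x^*\in \J_p^*:\sum_n x^*(n)=0\}$ of the usual norm of $\J_p^*$ is $q$-AUS. Since $\J_p^*\simeq X\oplus \R$, I would equip $\J_p^*$ with the equivalent norm $\|(x^*,\lambda)\|' = (\|x^*\|_X^q + |\lambda|^q)^{1/q}$, which preserves the $q$-AUS property on the $X$ component and is trivially $q$-AUS on the one-dimensional summand, hence is $q$-AUS on the whole of $\J_p^*$.

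With the three hypotheses in hand, Corollary~\ref{generalcorollary} applies and yields both assertions of Corollary~\ref{James} simultaneously. There is no genuine obstacle at this stage: the substantive work has already been carried out, first in the proof of Theorem~\ref{general} to derive the general corollary, and second in Propositions~\ref{Jsmooth} and~\ref{Jconvex} to isolate the precise asymptotic uniform smoothness features of $\J_p$ and of the kernel of the summing functional in $\J_p^*$ that allow the general theorem to be invoked here.
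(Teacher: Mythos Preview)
Your proposal is correct and follows exactly the same route as the paper: verify quasi-reflexivity, the $p$-AUS renorming of $\J_p$ via Proposition~\ref{Jsmooth}, and the $q$-AUS renorming of $\J_p^*$ via Proposition~\ref{Jconvex} together with $\J_p^*\simeq X\oplus\R$, then invoke Corollary~\ref{generalcorollary}. The only difference is that you spell out the $\ell_q$-sum norm on $X\oplus\R$ explicitly, which the paper leaves implicit.
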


\section{A Remark on the James tree space}\label{s:coarsejamestree}

Let us recall the construction of the James tree space $\J\T$. We denote $T=2^{<\omega}$ the tree of all finite sequences with coefficients in $\{0,1\}$ equipped with its natural order: for $s,t \in T$, we say that $s \le t$ if the sequence $t$ extends $s$. The set of all infinite sequences with coefficients in $\{0,1\}$ will be denoted $2^\omega$. For $s\in T$, the length of $s$ is denoted $|s|$. We call $\emph{segment}$ of $T$ any set of the form $\{s\in T,\ t\le s \le t'\}$ with $t\le t'$ in $T$. For a map $x:T\to \R$, we define
$$\|x\|_{\J\T}=\sup\Big\{\Big(\sum_{i=1}^n\big(\sum_{s\in S_i} x(s)\big)^2\Big)^{1/2} \Big\},$$
where the supremum is taken over all pairwise disjoint segments $S_1,\ldots,S_n$ of $T$. Then the \emph{James tree} space is the space $\J\T=\{x:T \to \R,\ \|x\|_{\J\T}<\infty\}$ equipped with the norm $\|\ \|_{\J\T}$. For $s\in T$, we denote $e_s:T\to \R$ defined by $e_s(t)=\delta_{s,t},\ t\in T$. If $\psi:\N \to T$ is a bijection such that $|\psi(n)|\le |\psi(m)|$ whenever $n\le m$, then $(e_{\psi(n)})_{n=1}^\infty$ is a normalized, monotone and boundedly complete basis of $\J\T$. For $s\in T$, the coordinate functional $e^*_s$ is defined by $e^*_s(x)=x(s)$, $x\in \J\T$. Then the closed linear span of $\{e^*_s,\ s\in T\}$ in $\J\T^*$ is denoted $\B$ and $\B^*$ is isometric to $\J\T$.
The space $\J\T$ was built by R.C. James in \cite{James2} to serve as the first example of a separable Banach space with non separable dual, which does not contain an isomorphic copy of $\ell_1$.

In \cite{kaltonpropertyq} it is shown that if a Banach space $X$ coarsely contains $c_0$ then there exists $k\in \N$ such that $X^{(k)}$, the dual of order $k$ of $X$, is non separable. A close look at the proof of Theorem 3.5 in \cite{kaltonpropertyq} allows to state the following.

\begin{thm}[Kalton]\label{kalton} Let $X$ and $Y$ be two Banach spaces such that $X$ coarsely embeds into $Y$. Assume moreover that there exist an uncountable set $I$ and for every $i\in I$ and $k\in \N$, a 1-Lipschitz map $f_i^k:([\N]^k,d_{\K}^k))\to X$ such that
$$\lim_{k\to \infty}\ \inf_{i\neq j \in I}\ \inf_{\M\in [\N]^\omega}\ \sup_{\n\in [\M]^k} \|f_i^k(\n)-f_j^k(\n)\|=\infty.$$
Then there exists $r\in \N$ such that $Y^{(r)}$ is not separable.
\end{thm}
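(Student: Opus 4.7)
The plan is to follow the scheme of Kalton's original proof of Theorem 3.5 in \cite{kaltonpropertyq}, verifying that the argument uses only the two stated properties of the maps $f_i^k$: the 1-Lipschitz condition and the uniform divergence as $k\to\infty$. I describe the steps.

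First I would assume for contradiction that $Y^{(r)}$ is separable for every $r\in\N$, and fix a coarse embedding $\varphi:X\to Y$. Composing with the $1$-Lipschitz maps $f_i^k$ yields maps $g_i^k:=\varphi\circ f_i^k:([\N]^k,d_{\K}^k)\to Y$ that are Lipschitz with constant at most $\omega_\varphi(1)$, independently of $i$ and $k$ (because $d_{\K}^k$ is a graph distance, so the expansion modulus of $f_i^k$ at integer values is bounded by the distance itself). The goal is to show that the uniform divergence of the $f_i^k$ cannot survive the composition with $\varphi$, yielding a contradiction.

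The second step is the iterated weak$^*$-null tree decomposition. Embedding $Y$ into $Y^{**}$ and invoking Proposition \ref{nulltree}, then iterating the procedure successively through the duals $Y^{**},Y^{(4)},\ldots$, and at each level trimming the tree via Ramsey's theorem (Proposition \ref{ramsey}), one represents $g_i^k$ on a full subgraph $[\M_i]^k$ by a tree decomposition whose asymptotic parameters live in a countable product of separable metric spaces---this uses in full the assumption that every iterated dual is separable. The outcome is that each $g_i^k$ is described, up to an arbitrarily small error, by a ``type'' $\tau_i$ lying in some Polish space $\mathcal{T}_k$.

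The third and most delicate step handles the uncountability of $I$. Since $I$ is uncountable while $\mathcal{T}_k$ is second countable, a pigeonhole argument combined with a further diagonal/Ramsey extraction yields two distinct indices $i\neq j\in I$ and a common $\M\in[\N]^\omega$ on which the types of $g_i^k$ and $g_j^k$ coincide up to a prescribed small error. A direct comparison of the two tree decompositions then gives a uniform estimate
\[
\sup_{\n\in[\M]^k}\|g_i^k(\n)-g_j^k(\n)\|\leq C(Y)\,\omega_\varphi(1),
\]
with $C(Y)$ independent of $i,j,k$. Since $\varphi$ is a coarse embedding, the compression modulus $\rho_\varphi$ transfers this to a bound $\sup_{\n\in[\M]^k}\|f_i^k(\n)-f_j^k(\n)\|\leq K$ for a finite constant $K$ independent of $k$, contradicting the divergence hypothesis once $k$ is large enough. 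The main technical obstacle, exactly as in Kalton's paper, is the joint stabilization step: one must orchestrate uncountably many iterated weak$^*$-tree extractions and still end up with two distinct indices sharing the same concentration set $\M$; the separability of every $Y^{(r)}$ is used precisely to keep the type space second countable, so that the uncountable pigeonhole applies.
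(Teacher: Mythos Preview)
The paper does not contain its own proof of Theorem~\ref{kalton}. The theorem is stated as a consequence of ``a close look at the proof of Theorem~3.5 in~\cite{kaltonpropertyq}'' and is attributed to Kalton without further argument. So there is no in-paper proof to compare your proposal against; the authors simply defer to Kalton's original article.

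Regarding your sketch itself: the general architecture---compose with the coarse embedding, iterate weak$^*$-null tree decompositions through the successive duals, stabilize by Ramsey, then use uncountability to find two indices whose ``types'' collide---is indeed the shape of Kalton's argument. But the sketch is not yet a proof, and the gap is precisely where you flag it. Your third step says one first extracts, for each $i$, a set $\M_i$ and a type $\tau_i$, then pigeonholes on the types, and then somehow ends up with a \emph{common} $\M$ for two distinct indices. That order does not work as stated: the extractions $\M_i$ are done index by index, and there is no reason two indices with nearby types should share, or even admit, a common refinement on which both decompositions are valid. In Kalton's proof the order is reversed: one first identifies, using only the separability of the iterated duals, a countable family of ``labels'' (the successive weak$^*$ limits in $Y^{**},Y^{(4)},\ldots$ together with the Ramsey-stabilized scalar data) so that pigeonhole on the uncountable $I$ produces $i\neq j$ with identical labels \emph{before} any infinite subset is fixed; only then does one run a single joint extraction for the pair $(i,j)$ to obtain one $\M$ on which both decompositions hold and the comparison $\|g_i^k(\n)-g_j^k(\n)\|\le C\,\omega_\varphi(1)$ is valid. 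Your phrase ``orchestrate uncountably many iterated weak$^*$-tree extractions'' is exactly what must be avoided. Apart from this ordering issue, the final transfer via $\rho_\varphi$ is correct.
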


As an application, we can show the following.

\begin{thm}\label{kalton+} Let $Y$ be a Banach space such that $\B$ or $\J\T$ coarsely embeds into $Y$. Then there exists $r\in \N$ such that $Y^{(r)}$ is not separable.
\end{thm}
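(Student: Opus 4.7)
The plan is to apply Kalton's Theorem~\ref{kalton} with $Y$ the given Banach space and $X \in \{\J\T, \B\}$ the space that coarsely embeds into $Y$. The uncountable index set will be $I = 2^\omega$, the set of infinite branches of $T$. For each $\sigma \in 2^\omega$ I shall construct a $1$-Lipschitz map $f_\sigma^k : ([\N]^k, d_\K^k) \to X$ such that two distinct branches eventually separate: I aim to show that for $\sigma \neq \tau$ splitting at some depth $d=d(\sigma,\tau)$ and for any $\bar n \in [\mathbb M]^k$ with $n_1 > d$ (which is always available as $\mathbb M$ is infinite) one has $\|f_\sigma^k(\bar n) - f_\tau^k(\bar n)\| \gtrsim \sqrt k$. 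Since $\sqrt k \to \infty$, this is exactly the hypothesis of Theorem~\ref{kalton}, which then yields an $r\in\N$ with $Y^{(r)}$ non-separable.

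In $\J\T$ the obvious candidate should work: set
$$f_\sigma^k(\bar n) = \frac{1}{\sqrt{2k}} \sum_{i=1}^k e_{\sigma|_{n_i}}.$$
For adjacent $\bar n, \bar m$ the difference is supported on the single branch $\sigma$ and has coefficients in $\{-1,0,1\}$; writing its partial-sum function along $\sigma$ one sees that it oscillates between $0$ and $1$ with at most $2k$ jumps, so its $2$-variation (which is exactly the $\J\T$-norm restricted to a single branch) is at most $\sqrt{2k}$, giving the $1$-Lipschitz bound. For the separation, if $\sigma$ and $\tau$ split at depth $d$ and $n_1 > d$, the vector $f_\sigma^k(\bar n) - f_\tau^k(\bar n)$ carries $k$ coefficients $+1$ on the branch $\sigma$ past level $d$ and $k$ coefficients $-1$ on the branch $\tau$ past level $d$; testing with the two disjoint segments of $T$ that collect these two sets yields $\|f_\sigma^k(\bar n) - f_\tau^k(\bar n)\|_{\J\T} \ge (2k)^{-1/2}\sqrt{k^2 + k^2} = \sqrt k$.

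For $\B$, which sits inside $\J\T^*$, I will use a dual candidate built from the segment functionals. Setting $S_n^\sigma = \{\sigma|_1, \ldots, \sigma|_n\}$ and $e^*_S = \sum_{s\in S} e^*_s$, define
$$g_\sigma^k(\bar n) = \frac{1}{\sqrt k} \sum_{i=1}^k e^*_{S_{n_i}^\sigma} \in \B.$$
For adjacent $\bar n, \bar m$ the difference equals, up to a sign, $k^{-1/2} \sum_{i=1}^k e^*_{T_i}$ where the $T_i$ are the $k$ pairwise disjoint segments $\{\sigma|_{n_i+1},\ldots,\sigma|_{m_i}\}$ along $\sigma$; pairing with any $x \in B_{\J\T}$ and applying Cauchy--Schwarz to the defining inequality $\sum_i \bigl(\sum_{s \in T_i} x(s)\bigr)^2 \le \|x\|_{\J\T}^2$ yields the $1$-Lipschitz estimate. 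For the separation, when $n_1 > d$ I will test $g_\sigma^k(\bar n) - g_\tau^k(\bar n)$ against the element $x \in \J\T$ defined by $x(\sigma|_j) = y$ and $x(\tau|_j) = -y$ for $d < j \le n_1$ (zero elsewhere), with $y = ((n_1-d)\sqrt 2)^{-1}$ so that $\|x\|_{\J\T} = 1$. Since $a_j := |\{i : n_i \ge j\}| = k$ for every $j \le n_1$, a direct computation gives $\langle x, g_\sigma^k(\bar n) - g_\tau^k(\bar n)\rangle = \sqrt{2k}$, hence the separation $\ge \sqrt{2k}$.

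The main obstacle is the combinatorial bookkeeping involved in optimizing over pairwise disjoint segments of $T$: one must produce an upper bound on the Lipschitz constant that is uniform over all admissible collections of segments (Cauchy--Schwarz on the coordinates of a test vector does the compression) and, simultaneously, exhibit a specific pair of segments (for $\J\T$) or a specific test vector (for $\B$) witnessing the lower bound on the separation. Once these two estimates are in place, the uncountable families $(f_\sigma^k)_{\sigma \in 2^\omega}$, resp.~$(g_\sigma^k)_{\sigma \in 2^\omega}$, satisfy the hypotheses of Theorem~\ref{kalton}, which concludes the proof.
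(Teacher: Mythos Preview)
Your proposal is correct and follows essentially the same route as the paper: index by $I=2^\omega$, use the maps $\frac{1}{\sqrt{2k}}\sum_i e_{\sigma|_{n_i}}$ in $\J\T$ and $\frac{1}{\sqrt k}\sum_i e^*_{S_{n_i}^\sigma}$ in $\B$, get the Lipschitz bound via Cauchy--Schwarz on disjoint segments, and then apply Theorem~\ref{kalton}. The only (harmless) differences are in the separation witnesses: for $\J\T$ the paper tests against the single segment functional $\sum_{s\in S}e_s^*$ with $S=\{\sigma|_{n_1},\ldots,\sigma|_{n_k}\}$ (yielding $\sqrt{k/2}$ instead of your $\sqrt k$), and for $\B$ it simply evaluates at $e_{\sigma|_{n_1}}$, which gives $\sqrt k$ in one line and avoids your auxiliary vector $x$.
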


\begin{proof} For $\sigma \in 2^\omega$, we denote $\sigma_{|n}=(\sigma_1,\ldots,\sigma_n)$.
Then, for $k\in \N$, we define $f_\sigma^k: [\N]^k \to \B$ as follows. For $\n=(n_1,\ldots,n_k)\in [\N]^k$ let
$$f_\sigma^k(\n)=\frac{1}{\sqrt{k}}\sum_{i=1}^k \sum_{s \le \sigma_{|n_i}} e^*_s.$$
Assume for instance that $n_1\le m_1 \le \cdots n_k\le m_k$. Then we can write
$$f_\sigma^k(\m)-f_\sigma^k(\n)= \frac{1}{\sqrt{k}} \sum_{i=1}^k \sum_{s \in S_i}e^*_s,$$
where $S_1,\ldots,S_k$ are pairwise disjoint segments in $T$. 
Note that for any segment $S_i$ the sum $\sum_{s \in S_i}e^*_s$ belongs to the unit ball of $\J\T^*$. It then follows from the Cauchy-Schwarz inequality that $f_\sigma^k$ is 1-Lipschitz on $([\N]^k,d_{\K}^k)$. 
Assume now that $\sigma\neq \tau \in 2^\omega$. Pick $r\in \N$ such that $\sigma_r\neq \tau_r$. Then for any $\M\in [\N]^\omega$ and any $\n=(n_1,\dots,n_k)\in [\M]^k$ with $n_1\ge r$, we have
$$\|f_\sigma^k(\n)-f_\tau^k(\n)\|_{\B}\ge \Big|\langle f_\sigma^k(\n)-f_\tau^k(\n), e_{\sigma_{|n_1}} \rangle \Big| \ge \sqrt{k}.$$
By Theorem \ref{kalton} and the uncountability of $2^\omega$, this finishes our proof for $\B$.

For $\sigma \in 2^\omega$ and $k\in \N$ define now $g_\sigma^k: [\N]^k \to \J\T$ by
$$\forall \n=(n_1,\ldots,n_k)\in [\N]^k,\ g_\sigma^k(\n)=\frac{1}{\sqrt{2k}} \sum_{i=1}^k e_{\sigma_{|n_i}}.$$
It is easily checked that $g_\sigma^k$ is 1-Lipschitz on $([\N]^k,d_{\K}^k)$.
Assume now that $\sigma\neq \tau \in 2^\omega$. Pick $r\in \N$ such that $\sigma_r\neq \tau_r$. Then for any $\M\in [\N]^\omega$ and any $\n=(n_1,\dots,n_k)\in [\M]^k$ with $n_1\ge r$, denote $S=\{s\in T, \sigma_{|n_1} \le s \le \sigma_{|n_k}\}$. The set $S$ is a segment in $T$ and $x^*=\sum_{s\in S}e^*_s$ is in the unit ball of $\J\T^*$. Therefore
$$\|g_\sigma^k(\n)-g_\tau^k(\n)\|_{\J\T}\ge \langle g_\sigma^k(\n)-g_\tau^k(\n), x^* \rangle \ge \frac{\sqrt{k}}{\sqrt{2}}.$$
This concludes our proof for $\J\T$.
\end{proof}

\noindent{\bf Acknowledgements.} We are grateful to P. Motakis for allowing us to include Proposition \ref{Pavlos} and for very useful discussions. We also wish to mention that the formula describing the interlaced distance given in Proposition \ref{p:KaltonDistanceFormula} has been independently obtained by 
F. Baudier, P. Motakis and Th. Schlumprecht.

\begin{bibsection}
\begin{biblist}

\bibitem{albiackalton} F. Albiac and N.J. Kalton, Topics in Banach Space Theory, Second Edition, volume 233 of {\it Graduate Texts in Mathematics}, Springer, New York, (2006-2016).

\bibitem{AMS} S. Argyros, P. Motakis and B. Sari, A study of conditional spreading sequences, preprint arXiv: 1611.04443v1.

\bibitem{blms} F. Baudier, G. Lancien, P. Motakis and Th. Schlumprecht, A new coarsely rigid class of Banach spaces, preprint arXiv:1806.00702.

\bibitem{bls} F. Baudier, G. Lancien and Th. Schlumprecht, The coarse geometry of Tsirelson's space and applications, {\it J. Amer. Math. Soc.}, {\bf 31} (2018), no. 3, 699--717.

\bibitem{BeauzLap} B. Beauzamy and J.T. Laprest\'e, Mod\`eles \'etal\'es des espaces de {B}anach, {\it Travaux en Cours}, Hermann, Paris, (1984).

\bibitem{BHO} S.F. Bellenot, R. Haydon, and E. Odell, Quasi-reflexive and tree spaces constructed in
the spirit of R.C. James, {\it Contemp. Math., Amer. Math. Soc., Providence, RI}, {\bf 85} (1989), 19--43.

\bibitem{CauseyLancien} R.M. Causey and G. Lancien, Prescribed Szlenk index of separable Banach spaces, to appear in {\it Studia Math.}, preprint arXiv: 1710.01638.

\bibitem{DKLR} S. Dilworth, D. Kutzarova, G. Lancien and L. Randrianarivony, Equivalent norms with the property $(\beta)$ of Rolewicz, {\it Rev. R. Acad. Cienc. Exactas, F\'{\i}s.  Nat. Ser. A Mat. RACSAM},  {\bf 111} (2017), no. 1, 101--113.

\bibitem{FOSZ} D. Freeman, E. Odell, B. Sari, and B. Zheng, On spreading sequences and asymptotic structures, to appear {\it in Trans. Amer. Math. Soc.}, preprint arXiv: 1607.03587v1.

\bibitem{Gowers} W.T. Gowers, Ramsey Methods in Banach Spaces, in Handbook of the Geometry of Banach Spaces vol. 2, 2003, 1071--1097.

\bibitem{James1} R.C. James, Bases and reflexivity of Banach spaces, {\it Ann. of Math. (2)}, {\bf 52}, (1950). 518--527.

\bibitem{James2} R.C. James, A separable somewhat reflexive Banach space with nonseparable dual, {\it Bull. Amer. Math. Soc.}, {\bf 80}, (1974), 738--743.

\bibitem{JohnsonLindenstraussPreissSchechtman2002} W.B. Johnson, J. Lindenstrauss, D. Preiss and G. Schechtman, Almost Fr\'echet differentiability of Lipschitz mappings between infinite-dimensional Banach spaces, {\it Proc. London Math. Soc. (3)} {\bf 84} (2002), no. 3, 711-746.

\bibitem{kaltonpropertyq} N.J. Kalton, Coarse and uniform embeddings into reflexive spaces, {\it Quart. J. Math. (Oxford)}, {\bf 58} (2007), 393--414.

\bibitem{KaltonRandrianarivony2008} N.J. Kalton and L. Randrianarivony, The coarse Lipschitz geometry of $\ell_p\oplus\ell_q$, {\it Math. Ann.} {\bf 341} (2008), no. 1, 223--237.

\bibitem{KaltonTAMS2013} N.J Kalton, Uniform homeomorphisms of Banach spaces and asymptotic structure, {\it Trans. Amer. Math. Soc.}, {\bf 365}  (2013), 1051--1079.

\bibitem{LancienPMB} G. Lancien, R\'eflexivit\'e et normes duales poss\`{e}dant la propri\'et\'e uniforme de Kadec-Klee, {\it Publications Math\'ematiques de Besan\c{c}on}, {\bf 14}, 1993/94.

\bibitem{LancienRaja2018} G. Lancien and M. Raja, Asymptotic and coarse Lipschitz structures of quasi-reflexive Banach spaces, {\it Houston J. of Math}, {\bf 44} (2018) no 3, 927-940.

\bibitem{Milman} V.D. Milman, Geometric theory of Banach spaces. II. Geometry of the unit ball (Russian), {\it Uspehi Mat. Nauk} {\bf 26} (1971), 73-149. English translation: {\it Russian Math. Surveys} {\bf 26} (1971), 79-163.

\bibitem{Netillard} F. N\'etillard, Coarse Lipschitz embeddings of James spaces, {\it Bull. Belg. Math. Soc. Simon Stevin}, {\bf 25} (2018), 1-14.

\bibitem{Schoenberg} I. J. Schoenberg, Metric spaces and positive definite functions, {\it Trans. Am. Math. Soc.} {\bf 44} (1938), 522-536.

\end{biblist}
\end{bibsection}

\end{document}